\documentclass[11pt,amsfonts]{article}
\usepackage{amsmath,amsfonts,amsthm,amssymb,color}
\usepackage[T1]{fontenc}
\usepackage{enumerate}
\usepackage{graphicx}
\usepackage{float}
\usepackage[ruled,vlined]{algorithm2e}
\usepackage{amssymb}
\usepackage{amsmath}
\usepackage{color}
\usepackage{layout}
\usepackage{amsfonts}
\usepackage{dsfont}
\usepackage{cite}
\usepackage{color}
\usepackage{listings}
\usepackage{anysize}
\usepackage[noend]{algpseudocode}
\usepackage{ulem}
\usepackage{latexsym}
\usepackage{ragged2e}
\usepackage{pdfsync}

\newtheorem{prop}{Proposition}
\newtheorem{lemma}{Lemma}

\newtheorem{corollary}{Corollary}
\newtheorem{theorem}{Theorem}
\newtheorem{remark}{Remark}

\def\real{{\mathord{{\rm I\kern-2.8pt R}}}}        
\def\inte{{\mathord{{\rm I\kern-2.8pt N}}}}

\def\sZZ{{\rm Z\kern-2.8ptem{}Z}}

\def\z{{\mathchoice
  {\sZZ}
  {\sZZ}
  {\rm Z\kern-0.30em{}Z}
  {\rm Z\kern-0.25em{}Z} }}
\def\sQQ{{\kern 0.27em \vrule height1.45ex width0.03em depth0em
          \kern-0.30em \rm Q}}
\def\qu{{\mathchoice
    {\sQQ}
    {\sQQ}
  {\kern 0.225em \vrule height1.05ex width0.025em depth0em \kern-0.25em \rm Q}
  {\kern 0.180em \vrule height0.78ex width0.020em depth0em \kern-0.20em \rm Q}
        }}
\def\sCC{{\kern 0.27em \vrule height1.45ex width0.03em depth0em
          \kern-0.30em \rm C}}
\def\complex{{\mathchoice
    {\sCC}
    {\sCC}
  {\kern 0.225em \vrule height1.05ex width0.025em depth0em \kern-0.25em \rm C}
  {\kern 0.180em \vrule height0.78ex width0.020em depth0em \kern-0.20em \rm C}
        }}

%
%
%
\font\tenmath=msbm10 \font\sevenmath=msbm7 \font\fivemath=msbm5
\newfam\mathfam \textfont\mathfam=\tenmath
\scriptfont\mathfam=\sevenmath \scriptscriptfont\mathfam=\fivemath

\newcommand{\ignore}[1]{}
\textwidth 6.0in \textheight 7.9in \evensidemargin -1pt
\oddsidemargin -1pt \topskip -2in \topmargin 3pt
\parindent0.5in

\begin{document}

\renewcommand{\thefootnote}{\fnsymbol{footnote}}

\title{Gamma mixed fractional Lévy Ornstein - Uhlenbeck process}

\author{ H\'ector Araya$^{1}$  \ \ \ \ Johanna Garz\'on$^{2}$  \ \ \ \  Rolando Rubilar$^{3}$  \\
\small $^{1}$ Instituto de Estad\'istica, Facultad de Ciencias, Universidad de Valpara\'iso,\\
  hector.arayaca@uv.cl\\ 
\small $^{2}$ Departamento de Matem\'atica, Facultad de Ciencias, Universidad Nacional de Colombia,\\
mjgarzonm@unal.edu.co\\ 
\small $^{3}$ Instituto de Estad\'istica, Facultad de Ciencias, Universidad de Valpara\'iso,\\
  rolando.rubilar@uv.cl\\ 
}

\maketitle
\begin{abstract}
In this article, we introduce  a non Gaussian long memory process constructed by the aggregation of independent copies of a fractional Lévy Ornstein-Uhlenbeck process with random coefficients. Several properties and a limit theorem are studied for this new process. Finally, some simulations of the limit process are shown.    
\end{abstract}
\vskip0.3cm

{\bf 2010 AMS Classification Numbers:} 60G10,  60G17, 60H05, 60H30.
\vskip0.3cm
{\bf Key Words and Phrases}:  non Gaussian, fractional Lévy process, Ornstein-Uhlenbeck, random coefficients. 

\section{Introduction}

An Ornstein-Uhlenbeck (OU) process is a diffusion process introduced by the physicists Leonard Salomon Ornstein and George Eugene Uhlenbeck \cite{OU} to describe the stochastic behavior of the velocity of a particle undergoing Brownian motion. The OU process $X=\{X(t), \  {t\geq 0}\}$ is  the solution of the Langevin equation 
\begin{equation}
\label{OU}
    dX(t)= \alpha X(t) dt + \sigma dB(t), \ \ t \geq 0,
\end{equation}
where $X(0)=x\in \mathbb{R}$, $B=\{B(t), t\geq 0\}$ is a Brownian motion and $\alpha, \sigma$ are constants. This process is  stationary, Gaussian and Markovian; in fact, it is the only stochastic process which has all these three properties. This process is used for modelling in many different fields such as physics, biology and finance among others (see \cite{Aalen, butler, kahl, OU, ricciardi, Vasicek} and references therein) and it has been widely generalized. 

Different extensions of the Ornstein-Uhlenbeck processes have been obtained to replace the Brownian motion in \eqref{OU} by more general noise processes; for example, Lévy OU \cite{Barndorff}, fractional OU \cite{cheridito}, sub-fractional OU  \cite{mishura} or Hermite OU processes \cite{maejima}. These  are introduced by solution of the Langevin equation with driving noise given by a Lévy process, fractional Brownian motion, sub-fractional Brownian motion, Hermite process respectively. 

In the study of long-range dependence, Igoli and Terdik \cite{igloi} defined a generalization of OU  process with this property. This process is called Gamma-mixed Ornstein-Uhlenbeck processes and it is  built via aggregation of a sequence of random coefficient independent Ornstein-Uhlenbeck processes. Let us be more precise. Given a sequence $(X_k)_{k\in \mathbb{N}}$ of stochastic processes such that for each $k\geq 1$, the process $X_k$ is the solution of the Langevin equation 

\begin{equation}
\label{OU1}
    dX_k(t)= \alpha_k X(t) dt + dB(t), 
\end{equation}
where $B$ is a Brownian motion with time parameter $t \in \mathbb{R}$ and $(-\alpha_k)_{k \in \mathbb{N}}$ are independent random variables (also independent of $B$) with Gamma distribution $\Gamma(1-h, \lambda)$ with $h\in (0,1)$ and $\lambda >0$. The aggregated process is given by
$$Y_n(t) = \frac{1}{n}\sum_{k=1}^n X_k(t),$$
and it converges, as $n \to \infty$, to a stochastic process $Y$ which is a stationary Gaussian process, semimartingale, asymptotically self-similar and it has long-range dependence. This limit process is the so-called Gamma-mixed Ornstein-Uhlenbeck. In a similar way, in \cite{dou} and \cite{es1} the authors  studied the case where $B$ in \eqref{OU1} is generalized to be a fractional Brownian motion and Hermite process respectively; they define the fractional Ornstein-Uhlenbeck process mixed with a Gamma distribution and the Hermite Ornstein-Uhlenbeck process mixed with a Gamma distribution, both processes exhibit long range dependence, the first one is a Gaussian process but the second one is not Gaussian.

The aim of this paper is to define and study some properties of the Gamma mixed fractional Lévy Ornstein-Uhlenbeck process obtained as the limit of the aggregated OU process driven by fractional Lévy process and random coefficient with Gamma distribution. 

The fractional Lévy process (fLp) was  defined by \cite{mar} as a  generalization of the moving average representation of fractional Brownian motion given by Mandelbrot and Van Ness \cite{mandelbrot} when replacing the Brownian motion in this integral representation by a  Lévy process with zero mean, finite variance and without Gaussian part. FLp is almost surely H\"olderian, has stationary increments and long range dependence,  but unlike fractional Brownian motion, this process is neither Gaussian nor self-similar process.  In \cite{Fink}, authors introduced the fractional Lévy Ornstein‐Uhlenbeck process  (fLOUp) as the unique stationary pathwise solution of the Langevin equation driven by a fLp and prove that its increments exhibit long range dependence. Recently, many authors have studied fLp and the fractional Lévy Ornstein‐Uhlenbeck process on theoretical and applicable levels, see for example \cite{bender, bender2, glaser, klau, Shen, tika, wang} and the references therein.

This paper is organized as follows.  In Section 2 we give a brief introduction to the fLp and the stochastic calculus related to this. Fractional Lévy Ornstein‐Uhlenbeck  process with random coefficient is introduced in Section 3.  In Section 4,  we define the aggregated processwhich of fLOUp with random coefficient  and study  its limit process, which we will call  Gamma mixed fractional Lévy Ornstein-Uhlenbeck process. Finally, in Section 5 we present some simulations of the paths of the Gamma mixed fractional Lévy Ornstein-Uhlenbeck process.

\section{Preliminaries}\label{Pre}
In this section, we briefly recall some relevant aspects of the fractional Lévy process (fLp), its main properties and stochastic integrals with respect to this  fLp. This process will be used in the remainder of the paper. We work on a complete probability space  $(\Omega_{L}, {\cal F}_{L}, \mathbb{P}_{L})$.
\subsection{Fractional Lévy process}
The fractional Lévy process $L^{d}=\{L^{d}_{t}, \ t \in \mathbb{R}\}$, with $d  \in ( 0, 1/2 ) $, is a non-Gaussian process defined as follows (see \cite{mar}): 

\begin{equation}\label{poisson}
L_t^d=\int_{\mathbb{R}} f_t^{(d)}(s) dL(s),  \quad t \in \mathbb{R},
\end{equation}
where the kernel function $f_t^{(d)}$ is given by
\begin{equation}\label{Kernel}
f_t^{(d)}(s) = {1 \over \Gamma(d +1) } [(t-s)_{+}^{d} - (-s)_{+}^{d}],  \ \ \ s\in\mathbb{R}
\end{equation}
and $L=\{L(t), \ t \in \mathbb{R}\}$ is a zero-mean two-sided L\'evy process with $\mathbb{E}(L_{1}^{2}) < \infty $ and without  Brownian component, i.e. 
\begin{equation*}
L_{t} = L^{(1)}_{t}1_{\lbrace t \geq 0 \rbrace} -L^{(2)}_{-t}1_{\lbrace t \leq 0 \rbrace},
\end{equation*}
where $L^{(1)}$ and $L^{(2)}$ are two independent copies of the same one-sided Lévy process. 

The following Lemma (see \cite{mar, klau}) establishes that the fLp  is well defined in the $L^2(\Omega)$-sense and gives its characteristic function.
\begin{lemma}\label{lemma3}
Let $L=\{L_t\}_{t\in \mathbb{R}}$ be a two-sided Lévy process without Brownian component such that $E[L(1)]=0$ and $E[L(1)^{2}]<\infty$. For $t \in \mathbb{R}$, let $f_{t}\in L^{2} (\Omega)$. Then the integral $S(t):=\int_{\mathbb{R}} f_{t}(u) dL(u)$ exists in the $L^{2} (\Omega)$ sense and $E[S(t)] = 0$.  Furthermore, $S(t)$ satisfies the isometry  
\begin{equation*}
E[(S(t))^{2}] =E[(L(1))^{2}] \Vert  f_{t}(\cdot )\Vert_{L^{2}(\mathbb{R})}, \ \ \ t \in \mathbb{R}, 
\end{equation*}
the covariance function of process $S$ is given by
\begin{equation*}
\tilde{\Gamma}(s,t) = cov(S(s),S(t)) =E[(L(1))^{2}]  \int_{\mathbb{R}} f_{t}(u ) f_{s}(u) du, \ \ s,t \in \mathbb{R}
\end{equation*}
and  the characteristic function of  $S(t_{1}), \ldots , S(t_{m})$ for $t_{1} < \ldots < t_{m} $ and $m \in \mathbb{N}$ is given by
\begin{equation*}
E_{L}\left[ \exp\left(  i \sum_{j=1}^{m} \theta_{j} S(t_{j}) \right) \right] =  \exp\left(  \int_{\mathbb{R}} 
\psi\left(      \sum_{j=1}^{m} \theta_{j} f_{t_{j}}(s)   \right) ds \right), \ \ \ \text{for} \ \theta_{j} \in \mathbb{R}, \ j=1, \ldots , m,
\end{equation*}
where 
$$\psi(u) = \int_{\mathbb{R}} (e^{iux}-1-iux) \nu(dx), \quad u \in \mathbb{R}$$ 
and $\nu$ is the Lévy measure of $L$.
\end{lemma}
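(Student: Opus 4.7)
The plan is to proceed via the standard approximation of $L^2(\mathbb{R})$ functions by elementary simple functions, combined with the Lévy–Khintchine formula for $L$. First I would verify that the kernel $f_t^{(d)}$ actually lies in $L^2(\mathbb{R})$, which is the same calculation as for the Mandelbrot–Van Ness kernel of fBm with Hurst parameter $H=d+1/2 \in (1/2,1)$ and is guaranteed by $d<1/2$. Since the statement is formulated for a general $f_t \in L^2(\Omega)$ (read $L^2(\mathbb{R})$), the integral is independent of the specific kernel.

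Next, I would define $S$ on simple functions $f=\sum_{i=1}^N c_i \mathbf{1}_{(a_i,b_i]}$ by $\int_{\mathbb{R}} f\,dL := \sum_i c_i(L(b_i)-L(a_i))$, and on these use stationarity and independence of the increments of $L$, together with $E[L(1)]=0$ and $E[L(1)^2]<\infty$, to get both $E[\int f\,dL]=0$ and the isometry
\begin{equation*}
E\Bigl[\Bigl(\int f\,dL\Bigr)^{2}\Bigr]=E[L(1)^2]\sum_i c_i^2(b_i-a_i)=E[L(1)^2]\,\|f\|_{L^2(\mathbb{R})}^2.
\end{equation*}
Since simple functions are dense in $L^2(\mathbb{R})$, this extends by linearity and isometry to all $f\in L^2(\mathbb{R})$, giving existence of $S(t)$, mean zero, and the isometry for $f_t$. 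The covariance formula is then immediate by polarization:
\begin{equation*}
\mathrm{cov}(S(s),S(t))=\tfrac{1}{4}\bigl(E[(S(s)+S(t))^2]-E[(S(s)-S(t))^2]\bigr)=E[L(1)^2]\int_{\mathbb{R}} f_t(u)f_s(u)\,du.
\end{equation*}

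For the joint characteristic function, I would first treat the case where each $f_{t_j}$ is a simple function supported on the same disjoint partition $\{(a_i,b_i]\}$, so that the linear combination $\sum_j \theta_j f_{t_j}$ is again simple. Independence of the increments of $L$ splits the characteristic function into a product over $i$, and the Lévy–Khintchine formula applied to the pure-jump, zero-mean, finite-variance process $L$ yields $E[\exp(i\eta(L(b_i)-L(a_i)))]=\exp((b_i-a_i)\psi(\eta))$ with $\psi$ exactly as in the statement. Collecting the terms rewrites the product as $\exp(\int_{\mathbb{R}} \psi(\sum_j \theta_j f_{t_j}(s))\,ds)$ for simple $f_{t_j}$.

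The remaining and, in my view, most delicate step is passing to the limit. Approximating each $f_{t_j}$ in $L^2(\mathbb{R})$ by simple functions $f_{t_j}^{(n)}$ gives $L^2(\Omega)$-convergence of $\sum_j \theta_j S^{(n)}(t_j)$ to $\sum_j \theta_j S(t_j)$ and hence convergence of the left-hand characteristic functions. For the right-hand side I would use the standard bound $|\psi(u)|\le \tfrac{1}{2}u^2\int x^2\,\nu(dx)=\tfrac{1}{2}u^2 E[L(1)^2]$, valid under the finite second moment and zero mean assumptions, together with the elementary inequality $(\sum_j \theta_j f_{t_j}^{(n)})^2 \le m\sum_j \theta_j^2 (f_{t_j}^{(n)})^2$, to dominate the integrands and invoke dominated convergence. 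This yields the characteristic function identity for arbitrary $f_{t_j}\in L^2(\mathbb{R})$ and completes the proof.
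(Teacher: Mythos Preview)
Your argument is correct and follows the standard construction. The paper itself does not prove this lemma: it is stated in the preliminaries with a reference to Marquardt (2006) and Kl\"uppelberg--Matsui (2015), so there is no in-paper proof to compare against. The route you take---define the integral on simple functions, use independence and stationarity of increments for the isometry, extend by density, polarize for the covariance, and apply the L\'evy--Khintchine formula increment-by-increment for the characteristic function---is exactly the argument given in those references.

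One small technical point on the last step: the phrase ``dominate the integrands and invoke dominated convergence'' is slightly loose, because the candidate dominator $m\sum_j\theta_j^2\bigl(f_{t_j}^{(n)}\bigr)^2$ still depends on $n$. You can repair this either by passing to an a.e.-convergent subsequence with a fixed $L^2$ majorant (the standard consequence of $L^2$-convergence), or, more cleanly, by using the estimate
\[
|\psi(u)-\psi(v)|\le E[L(1)^2]\,|u-v|\,(|u|+|v|),
\]
which follows from $|\psi'(w)|\le |w|\int x^2\,\nu(dx)$; together with Cauchy--Schwarz this gives
\[
\Bigl|\int_{\mathbb{R}}\psi(f_n(s))\,ds-\int_{\mathbb{R}}\psi(f(s))\,ds\Bigr|\le E[L(1)^2]\,\|f_n-f\|_{L^2(\mathbb{R})}\bigl(\|f_n\|_{L^2(\mathbb{R})}+\|f\|_{L^2(\mathbb{R})}\bigr)\longrightarrow 0,
\]
so the right-hand side is continuous in the $L^2(\mathbb{R})$-topology and the limit passes without any subsequence extraction.
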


From (\ref{poisson}) we can see that the covariance function of $L^d$ is given by
\begin{equation}
\mathbb{E}(L^{d}_{t} L^{d}_{s})= \frac{1}{2}V_{d}^{2}\left( \vert t \vert^{2d+1} +   \vert s \vert^{2d+1} - \vert t-s \vert^{2d+1} \right), \quad t,s \in \mathbb{R},
\end{equation}
where $V_{d}^{2}= \frac{\mathbb{E}(L_{1}^{2})}{2 \Gamma (2d+2) \sin ( \pi (d + 1/2) )}$. Up to a scaling constant, this is the same covariance as fractional Brownian motion.\\

The fractional Lévy process $L^d$ defined by (\ref{poisson})  has the following properties (see \cite{mar}  for their proofs):  
\begin{itemize}
\item For any $\beta \in (0, d)$,  the sample paths of $L^d$ are a.s $\beta$- H\"older continuous.
\item $L^{d}$ is a process with stationary increments and symmetric, i.e. $ \lbrace L^{d}_{-t} \rbrace_{t \in \mathbb{R}}  \stackrel{(d)}{=} \lbrace- L^{d}_{t} \rbrace_{t \in \mathbb{R}}$. 
\item $L^{d}$ cannot be self-similar. However,  $L^{d}$ is asymptotically self-similar with parameter $0<d<0.5$, i.e 
\begin{equation*}
\lim\limits_{c \rightarrow \infty} \left\lbrace \frac{L^{d}_{ct}}{c^{d}} \right\rbrace_{t \in \mathbb{R}}  \stackrel{(d)}{=}  \left\lbrace B^{d}_{t} \right\rbrace_{t \in \mathbb{R}},
\end{equation*}    
where the equality is in the sense of finite dimensional distributions.
\item For $h>0$, the covariance between two increments $L^{d}_{t+h} - L^{d}_{t}$ and  $L^{d}_{s+h} - L^{d}_{s}$, where $s+h \leq t$ and $t-s=nh$ is 
\begin{eqnarray}
\delta_{d}(n) &=& V_{d}^{2} h^{2d+1} \left[ (n+1)^{2d+1} + (n-1)^{2d+1} - 2n^{2d+1}\right] \nonumber \\
&=& V_{d}^{2} d(2d+1) h^{2d+1}n^{2d-1} + O(n^{2d-2}), \quad n \rightarrow \infty. 
\end{eqnarray} 
\item The increments of $L^d$ exhibit long memory, in the sense that for $d>0$, we have 
\begin{equation*}
\sum_{n=1}^{\infty} \delta_{d}(n) = \infty.
\end{equation*}

\end{itemize}
In the following, we recall two results from the reference \cite{mar} concerning  to stochastic integrals with respect to fractional Lévy process. 

Let $L^{1}(\mathbb{R})$ and $L^{2}(\mathbb{R})$ be the spaces of integrable and square integrable real functions, respectively and  $H$ the completion of $L^{1}(\mathbb{R}) \cap L^{2}(\mathbb{R})$  with respect to the norm $\Vert g \Vert^{2}_{H} = E[L_{1}^{2}] \int_{\mathbb{R}} (I^{d}_{-}g)^{2}(u) du$, where $(I^{d}_-{g})$ denotes the right-sided Riemann-Liouville fractional integral defined by 
$$(I^{d}_-{g})(x)= \dfrac{1}{\Gamma(d)} \int_{x}^{\infty} g(t)(t-x)^{d-1}dt.$$ 
\begin{lemma}\label{lemma1}
If $g \in H$, then 
$$\int_{\mathbb{R}}  g(s) dL^{d}_{s} = \int_{\mathbb{R}}  (I^{d}_-{g})(s)  dL_{s} ,$$
where the equality is in $L^{2}$ sense.
\end{lemma}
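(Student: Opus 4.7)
The plan is to prove the identity first for indicator functions, extend by linearity to step functions, and then pass to the limit in $H$ using the isometry of Lemma~\ref{lemma3}.

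First I would verify the identity on the generating class. Take $g = \mathbf{1}_{[0,t]}$ for $t>0$ (the case $t<0$ and intervals $[a,b]$ follow analogously by translation/subtraction). For this $g$, the integral $\int_\mathbb{R} g(s)\,dL^{d}_{s}$ should by construction equal $L^{d}_{t}$, which by the defining formula \eqref{poisson} equals $\int_\mathbb{R} f^{(d)}_{t}(s)\,dL(s)$. On the other hand, applying the right-sided Riemann-Liouville operator to $\mathbf{1}_{[0,t]}$ one computes, distinguishing the cases $x>t$, $0\le x\le t$, and $x<0$,
\begin{equation*}
(I^{d}_{-}\mathbf{1}_{[0,t]})(x) \;=\; \frac{1}{\Gamma(d+1)}\bigl[(t-x)_{+}^{d} - (-x)_{+}^{d}\bigr] \;=\; f^{(d)}_{t}(x).
\end{equation*}
Thus the two sides coincide for $g=\mathbf{1}_{[0,t]}$, and by linearity for every elementary function $g=\sum_{k=1}^{n} c_{k}\mathbf{1}_{[a_{k},b_{k}]}$.

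Next I would set up the extension argument. By Lemma~\ref{lemma3} applied to the deterministic kernel $I^{d}_{-}g$, the map
\begin{equation*}
T(g) \;:=\; \int_{\mathbb{R}} (I^{d}_{-}g)(s)\,dL_{s}
\end{equation*}
is a linear map from $L^{1}(\mathbb{R})\cap L^{2}(\mathbb{R})$ into $L^{2}(\Omega_L)$ with
\begin{equation*}
\mathbb{E}\bigl[T(g)^{2}\bigr] \;=\; \mathbb{E}[L_{1}^{2}]\int_{\mathbb{R}}(I^{d}_{-}g)^{2}(u)\,du \;=\; \|g\|_{H}^{2}.
\end{equation*}
Hence $T$ is an isometry and admits a unique continuous extension to all of $H$. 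On the other hand, the stochastic integral $g \mapsto \int_{\mathbb{R}} g(s)\,dL^{d}_{s}$ is, by construction in \cite{mar}, defined as the isometric extension from elementary functions of the map sending $\mathbf{1}_{[a,b]}$ to $L^{d}_{b}-L^{d}_{a}$, with respect to the very same norm $\|\cdot\|_{H}$.

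Finally I would conclude by density. Since elementary functions are dense in $L^{1}(\mathbb{R})\cap L^{2}(\mathbb{R})$ and hence dense in $H$, and since both maps $g\mapsto \int g\,dL^{d}$ and $g\mapsto T(g)$ are linear isometries from $H$ into $L^{2}(\Omega_L)$ that agree on this dense subset, they must coincide on all of $H$. The likely subtlety is not any particular computation but checking that $I^{d}_{-}g$ is well defined and $L^{2}$ on $\mathbb{R}$ for arbitrary $g\in H$, and that the chain of isometries is consistent; this is handled by the definition of $H$ itself, which is tailored precisely so that $I^{d}_{-}$ extends to an isometry into $L^{2}(\mathbb{R})$.
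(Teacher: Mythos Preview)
The paper does not actually prove this lemma: it is stated in Section~\ref{Pre} as one of two results ``recall[ed] from the reference \cite{mar}'' (Marquardt, 2006), with no proof given. So there is no in-paper argument to compare against.

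That said, your proposed argument is precisely the standard one, and is essentially how the result is obtained in \cite{mar}. The key computation $(I^{d}_{-}\mathbf{1}_{[0,t]})(x)=f^{(d)}_{t}(x)$ is correct, and the extension by isometry with respect to $\|\cdot\|_{H}$ is exactly the mechanism by which the integral with respect to $L^{d}$ is defined in that reference, so the two isometric extensions must agree. Your closing remark about the subtlety is also on point: the space $H$ is defined so that $I^{d}_{-}$ maps it isometrically into $L^{2}(\mathbb{R})$, which is what makes the right-hand side well defined for general $g\in H$.
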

The next second-order property of the stochastic integral respect to fLp will be a key tool in this article.

\begin{lemma}\label{lemma2}
If $ \vert f \vert ,  \vert g \vert \in H$, then 
$$E \left[ \int_{\mathbb{R}}  f(s) dL^{d}_{s} \int_{\mathbb{R}}  g(s) dL^{d}_{s}  \right] = C_d \int_{\mathbb{R}}  \int_{\mathbb{R}} f(t) g(s) \vert  t-s\vert^{2d-1} dsdt, $$
\end{lemma}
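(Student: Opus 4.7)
The plan is to reduce the claim to the ordinary Lévy isometry stated in Lemma~\ref{lemma3} (applied through polarization) and then perform an explicit fractional integration-by-parts computation using the kernel representation of $I^{d}_{-}$.

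First, I would invoke Lemma~\ref{lemma1} to rewrite both stochastic integrals as integrals against the underlying (non-fractional) Lévy process $L$:
\begin{equation*}
\int_{\mathbb{R}} f(s)\, dL^{d}_{s} = \int_{\mathbb{R}} (I^{d}_{-}f)(s)\, dL_{s}, \qquad \int_{\mathbb{R}} g(s)\, dL^{d}_{s} = \int_{\mathbb{R}} (I^{d}_{-}g)(s)\, dL_{s}.
\end{equation*}
Polarizing the isometry in Lemma~\ref{lemma3} (applied to the simple integrands) yields
\begin{equation*}
E\!\left[\int_{\mathbb{R}} f(s)\, dL^{d}_{s} \int_{\mathbb{R}} g(s)\, dL^{d}_{s}\right] = E[L_{1}^{2}] \int_{\mathbb{R}} (I^{d}_{-}f)(s)\,(I^{d}_{-}g)(s)\, ds.
\end{equation*}
The hypothesis $|f|,|g|\in H$ ensures the integrals on both sides are absolutely convergent and that the polarization step is justified.

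Next, I would substitute the explicit definition of $I^{d}_{-}$ and exchange the order of integration:
\begin{equation*}
\int_{\mathbb{R}} (I^{d}_{-}f)(s)(I^{d}_{-}g)(s)\, ds = \frac{1}{\Gamma(d)^{2}} \int_{\mathbb{R}}\int_{\mathbb{R}} f(t)\,g(u) \left[\int_{-\infty}^{t\wedge u} (t-s)^{d-1}(u-s)^{d-1}\,ds\right] dt\, du.
\end{equation*}
For the inner integral, assume WLOG $t\leq u$ and substitute $s = t - (u-t)w$; this reduces it to a Beta integral
\begin{equation*}
\int_{-\infty}^{t} (t-s)^{d-1}(u-s)^{d-1}\,ds = (u-t)^{2d-1}\int_{0}^{\infty} w^{d-1}(1+w)^{d-1}\,dw = |t-u|^{2d-1}\,B(d,\,1-2d),
\end{equation*}
with $B(d,1-2d)=\Gamma(d)\Gamma(1-2d)/\Gamma(1-d)$. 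The condition $0<d<1/2$ is precisely what makes this Beta integral convergent. Combining everything gives the claim with
\begin{equation*}
C_{d} = E[L_{1}^{2}]\,\frac{\Gamma(1-2d)}{\Gamma(d)\,\Gamma(1-d)}.
\end{equation*}

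The main obstacle is a technical one: justifying the application of Fubini's theorem when exchanging the order of the three integrals, and similarly justifying the polarized isometry for general $f,g\in H$ rather than only for simple functions. Both are handled by the assumption $|f|,|g|\in H$, which allows one to dominate the triple integrand by $|f(t)g(u)|\,|t-u|^{2d-1}$ (finite by the same Beta computation applied to $|f|$ and $|g|$) and to approximate $f,g$ by elementary functions in the $H$-norm so that the isometry and the identity pass to the limit. The algebraic heart of the proof is simply the Beta-function identification of the convolution $(t-s)_{+}^{d-1}*(u-s)_{+}^{d-1}$ evaluated at $0$.
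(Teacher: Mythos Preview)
Your argument is correct and is essentially the standard derivation of this covariance identity. Note, however, that the paper does not actually prove Lemma~\ref{lemma2}: it is stated there as a result \emph{recalled} from \cite{mar} (see the sentence introducing Lemmas~\ref{lemma1} and~\ref{lemma2}), so there is no in-paper proof to compare against. Your route---passing to the underlying L\'evy process via Lemma~\ref{lemma1}, applying the polarized $L^{2}$ isometry, and then identifying $\int_{\mathbb{R}}(I^{d}_{-}f)(s)(I^{d}_{-}g)(s)\,ds$ with the Riesz-type kernel $|t-u|^{2d-1}$ through the Beta integral $B(d,1-2d)$---is exactly the computation carried out in Marquardt's original paper, and your handling of the Fubini justification via the hypothesis $|f|,|g|\in H$ is the right way to make the interchange rigorous.
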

where
\begin{equation}\label{eqcd} 
C_d =  \dfrac{\Gamma(1-2d)E[L_{1}^{2}]}{\Gamma(d) \Gamma(1-d)}.
\end{equation}

The following result from reference \cite{Fink}, established the solution of the Langevin equation driven by a fractional Lévy process. 

\begin{theorem}
\label{teofLOU}
Let $L^d$ be an fLp, $d \in (0, 1/2)$ and $\lambda > 0$. Then the unique stationary pathwise solution of the Langevin equation 
\begin{equation*}
\label{OU2}
    dX^{d, \lambda}(t)= \lambda X^{d, \lambda}(t) dt + dL^d_t 
\end{equation*}
is given a.s. by 
\begin{equation}
\label{eqfLOUp}
    X^{d, \lambda}(t) = \int_{-\infty}^{t} e^{\lambda(t-u) }dL^{d}(u), \quad t \in \mathbb{R},
\end{equation}
this process is called fractional Lévy-Ornstein-Uhlenbeck processes (fLOUp).
\end{theorem}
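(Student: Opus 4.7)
The plan is to prove existence, to verify that the proposed process solves the Langevin equation, and then to establish uniqueness among stationary pathwise solutions. The existence part will rely on Lemmas \ref{lemma1} and \ref{lemma2} to show that the kernel $g_t(u) = e^{\lambda(t-u)}\mathbf{1}_{(-\infty,t]}(u)$ (with the sign convention of the paper taken so the exponential decays away from $t$) lies in the space $H$, so that the integral $\int_{-\infty}^{t} e^{\lambda(t-u)}\,dL^d(u)$ is well defined in $L^2(\Omega_L)$.

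First I would verify integrability: by Lemma \ref{lemma1}, it is enough to bound $\|I^{d}_{-} g_t\|_{L^2(\mathbb{R})}$. A direct calculation of the right-sided Riemann–Liouville integral of an exponential times an indicator gives a closed-form expression involving the incomplete gamma function, from which square-integrability follows immediately, using the exponential decay of the kernel. Alternatively, one may invoke Lemma \ref{lemma2} with $f = g = g_t$ and compute
\[
\mathbb{E}\bigl[X^{d,\lambda}(t)^2\bigr] = C_d \int_{-\infty}^{t}\int_{-\infty}^{t} e^{\lambda(t-u)} e^{\lambda(t-v)} |u-v|^{2d-1}\,du\,dv,
\]
and a change of variables $u \mapsto t-u$, $v \mapsto t-v$ shows the double integral is finite and independent of $t$, which simultaneously delivers stationarity of $X^{d,\lambda}$ (stationarity of the full finite-dimensional distributions follows from the same change of variable applied jointly to several time instants, together with the stationary increments of $L^d$ inherited from Lemma \ref{lemma3}).

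Next I would verify the Langevin equation. Writing
\[
X^{d,\lambda}(t) = e^{\lambda t}\int_{-\infty}^{t} e^{-\lambda u}\,dL^d(u),
\]
I differentiate the product: the exponential prefactor produces a term $\lambda X^{d,\lambda}(t)\,dt$, while the variable upper limit contributes $e^{\lambda t}\cdot e^{-\lambda t}\,dL^d(t) = dL^d(t)$. Summing gives precisely $dX^{d,\lambda}(t) = \lambda X^{d,\lambda}(t)\,dt + dL^d(t)$. Pathwise, this splitting is legitimate because $L^d$ has a.s. H\"older continuous sample paths (with index in $(0,d)$), so the Lebesgue–Stieltjes/Young integration needed here is classical; one obtains a path-by-path identity valid for all $t\in\mathbb{R}$.

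For uniqueness, suppose $Y$ is another stationary pathwise solution of the Langevin equation. Then $Z(t) := X^{d,\lambda}(t) - Y(t)$ satisfies the homogeneous equation $dZ(t) = \lambda Z(t)\,dt$ pathwise, whose only solutions are $Z(t) = Z(0)\,e^{\lambda t}$. Since both $X^{d,\lambda}$ and $Y$ are stationary with finite variance, $Z$ is stationary with finite variance as well; but $Z(0)\,e^{\lambda t}$ can be stationary only when $Z(0) = 0$ a.s., forcing $X^{d,\lambda} \equiv Y$. The main obstacle I anticipate is the first step: verifying that $g_t \in H$ requires an explicit estimate of the fractional integral of an exponential truncated at $t$ and some care with the singularity of the $|u-v|^{2d-1}$ kernel in the covariance formula; once that is handled, the remaining steps are essentially computational, in close analogy with the Gaussian fractional OU theory.
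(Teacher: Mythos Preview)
The paper does not actually prove this theorem: it is stated as ``The following result from reference \cite{Fink}'' and no proof is given. So there is no in-paper argument to compare your proposal against; the authors simply import the result from Fink and Kl\"uppelberg and use it as a black box (equation \eqref{eqEV2} later carries out exactly the $L^2$-computation you describe, but for the random-coefficient version, not as a proof of Theorem~\ref{teofLOU}).

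That said, your outline is essentially the argument one finds in \cite{Fink}: check $g_t(u)=e^{\lambda(t-u)}\mathbf{1}_{(-\infty,t]}(u)\in H$ via Lemma~\ref{lemma1}/\ref{lemma2}, verify the Langevin equation pathwise using the H\"older regularity of $L^d$, and get uniqueness by showing the difference of two stationary solutions solves the homogeneous ODE and must vanish. You were right to flag the sign convention: as written in the paper, $\lambda>0$ together with $dX=\lambda X\,dt+dL^d$ and the kernel $e^{\lambda(t-u)}$ on $(-\infty,t]$ gives a divergent integral; consistency with the rest of the paper (where $-\lambda$ is Gamma-distributed, hence $\lambda<0$, and \eqref{eqEV2} yields $C_d\Gamma(2d)/(-\lambda)^{2d+1}$) requires the exponential to decay. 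The only point in your sketch that deserves a bit more care is the identification of the $L^2$-defined integral $\int_{-\infty}^t e^{\lambda(t-u)}\,dL^d(u)$ with a pathwise (improper Riemann--Stieltjes/Young) integral, so that the product-rule differentiation is legitimate path by path; this is exactly what \cite{Fink} establishes, and once it is in place your existence, verification, and uniqueness steps go through.
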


\section{Ornstein-Uhlenbeck process with random coefficient}

In this section, we study the fractional Lévy-Ornstein-Uhlenbeck processes (fLOUp) with random coefficients. First, we establish the existence of the solution, and then some properties of the process are shown.

\

We consider the fractional Lévy Ornstein-Uhlenbeck process $V^{d} = (V^{d}(t) , t \in \mathbb{R})$  given as the solution of
\begin{equation}\label{sde}
dV^{d}(t) = \lambda V^{d}(t)dt + dL^{d}(t), \quad t \in \mathbb{R} ,  
\end{equation}
where $L^{d}$ is a fLp with $d \in (0,1/2)$ and defined on $(\Omega_{L}, {\cal F}_{L}, \mathbb{P}_{L})$; and the coefficient $\lambda$ is a random variable defined on the probability space $(\Omega_{\lambda}, {\cal F}_{\lambda}, \mathbb{P}_{\lambda})$ and independent by $L^{d}$. We assume that $-\lambda$ follows a Gamma distribution with parameters $1-h$ and $\alpha$, i.e. $-\lambda \sim \Gamma(1-h, \alpha)$ with $h \in (0,1)$ and $\alpha >0$. 

\

From Theorem \ref{teofLOU}, the SDE given by  (\ref{sde}) has the explicit solution 
\begin{equation}\label{v}
V^{d}(t) = \int_{-\infty}^{t} e^{\lambda (t-u) }dL^{d}(u), \quad t \in \mathbb{R}, 
 \end{equation}
 where the initial condition is given by 
 \begin{equation*}
V^{d}(0) = \int_{-\infty}^{0} e^{-\lambda u }dL^{d}(u).
 \end{equation*}
 Clearly, this process is stationary. Furthermore, by Lemma \ref{lemma1} we can get 
 \begin{equation*}
V^{d}(t) =  \dfrac{1}{\Gamma(d)} \int_{\mathbb{R}} \int_{-\infty}^{t} e^{\lambda (t-v) } (v-u)_{+}^{d-1} dv dL(u), \quad t \in \mathbb{R}.
 \end{equation*}
 
 \
 
We will prove that for every $\omega_{\lambda} \in \Omega_{\lambda}$, the process $V^d$  is well defined in $L^{2}(\Omega_{L})$. In fact,  by Lemma \ref{lemma2} we have that for $C_{d} = \dfrac{\Gamma(1-2d)}{\Gamma(d) \Gamma(1-d)} E[L_{1}^{2}]$
\begin{eqnarray}
\label{eqEV2}
E_{L}[(V^{d}(t))^{2}] &=& C_{d} \int_{-\infty}^{t} \int_{-\infty}^{t}  e^{\lambda (t-u) } e^{\lambda_{k}(t-v) } \vert u-v \vert^{2d-1} du dv \notag \\
&=& C_{d} \int_{0}^{\infty} \int_{0}^{\infty}  e^{\lambda (u+v) }  \vert u-v \vert^{2d-1}dudv \notag \\
&=& 2 C_{d} \int_{0}^{\infty}   \int_{0}^{u}  e^{\lambda_{k}(u+v) }  ( u-v )^{2d-1} dvdu\notag \\
&=& 2  C_{d} \int_{0}^{\infty}   e^{2 \lambda u}   \int_{0}^{u}  e^{-\lambda v }  v ^{2d-1} dvdu\notag \\
&=& 2  C_{d} \int_{0}^{\infty}  e^{-\lambda v }  v ^{2d-1}   \int_{v}^{\infty}  e^{2 \lambda u} du dv \notag \\
&=& -\dfrac{C_{d}}{\lambda} \int_{0}^{\infty}  e^{\lambda v } v^{2d-1} dv= \dfrac{C_{d}}{(-\lambda)^{2d+1} } \Gamma(2d).
\end{eqnarray}

\

\begin{remark}
For $c\neq 0$, by \eqref{eqEV2} we have that $E_{L}[(V^{d}(ct))^{2}]=  \dfrac{C_{d}}{(-\lambda)^{2d+1} } \Gamma(2d)$. Hence the processes $V^{d}$ is no self-similar.
 \end{remark}

\

\begin{lemma}\label{stationarity}
The process $V^{d}$ (for fixed $\omega_{\lambda} \in \Omega_{\lambda}$) is stationary, i.e.  for $b>0$ and $t_{1} < \ldots < t_{n} $, with $n\in \mathbb{N}$ 
$$(V^{d}(t_{1} +b), \ldots , V^{d}(t_{n} + b))  \stackrel{(d)}{=} (V^{d}(t_{1} ), \ldots , V^{d}(t_{n} )),$$
where $\stackrel{(d)}{=}$ means equality in the sense of finite dimensional distributions. 
\end{lemma}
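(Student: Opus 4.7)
The plan is to reduce the stationarity statement to the joint characteristic function formula from Lemma \ref{lemma3}, using the representation of $V^d$ as an integral against the underlying Lévy process $L$. By Lemma \ref{lemma1},
$$V^d(t) = \int_{\mathbb{R}} h_t(s)\, dL(s), \qquad h_t(s) := \frac{1}{\Gamma(d)}\int_{-\infty}^{t} e^{\lambda(t-v)}(v-s)_+^{d-1}\, dv,$$
and the $L^2$-integrability required to make sense of this integral (and hence to apply Lemma \ref{lemma3}) is exactly what was verified in (\ref{eqEV2}) for the fixed value $\omega_\lambda$.

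The key computation is a translation identity for the kernel. Changing the inner variable $v \mapsto v-b$ in $h_{t+b}(s)$ yields
$$h_{t+b}(s) = \frac{1}{\Gamma(d)}\int_{-\infty}^{t+b} e^{\lambda(t+b-v)}(v-s)_+^{d-1}\, dv = \frac{1}{\Gamma(d)}\int_{-\infty}^{t} e^{\lambda(t-v')}(v'+b-s)_+^{d-1}\, dv' = h_t(s-b).$$
Then, applying the characteristic function formula of Lemma \ref{lemma3} to $(V^d(t_1+b),\ldots,V^d(t_n+b))$ and using this translation identity gives
$$E_L\!\left[\exp\!\left(i\sum_{j=1}^n \theta_j V^d(t_j+b)\right)\right] = \exp\!\left(\int_{\mathbb{R}}\psi\!\left(\sum_{j=1}^n \theta_j\, h_{t_j}(s-b)\right)ds\right).$$
A change of variable $s' = s-b$ in the outer integral removes the shift, producing exactly the characteristic function of $(V^d(t_1),\ldots,V^d(t_n))$. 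Since characteristic functions determine finite-dimensional distributions, this proves the claim.

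The main obstacle is the verification of the translation identity $h_{t+b}(s)=h_t(s-b)$; once that is in hand, everything reduces to a translation invariance of Lebesgue measure. A minor technical point is to check that $|h_{t_j+b}|$ and $|h_{t_j}|$ lie in the space $H$ so that Lemmas \ref{lemma1} and \ref{lemma3} can be applied simultaneously to all $n$ kernels, but this again follows from the finiteness computation in (\ref{eqEV2}) and the translation invariance just used.
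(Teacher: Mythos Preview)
Your proof is correct, but it takes a somewhat different route from the paper's. The paper works directly at the level of the fractional L\'evy process $L^d$: writing
\[
\sum_{i=1}^n u_i V^d(t_i+b) = \sum_{i=1}^n u_i \int_{-\infty}^{t_i+b} e^{\lambda(t_i+b-u)}\,dL^d(u),
\]
it performs the change of variable $u\mapsto u+b$ and then invokes the stationarity of the increments of $L^d$ (listed among the properties in Section~\ref{Pre}) to conclude, via the Cram\'er--Wold device, that this linear combination has the same law as $\sum_i u_i V^d(t_i)$. You instead descend to the underlying L\'evy process $L$ through Lemma~\ref{lemma1}, establish the translation identity $h_{t+b}(s)=h_t(s-b)$ for the composite kernel, and then feed this into the explicit characteristic function formula of Lemma~\ref{lemma3}, where translation invariance of Lebesgue measure finishes the job. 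Your argument is more self-contained---in effect you are re-deriving, in this instance, the increment stationarity of $L^d$ from the translation invariance of the L\'evy--Khintchine representation---whereas the paper's proof is shorter because it treats that property of $L^d$ as a black box.
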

\begin{proof}
For $b>0$, $u_{1}, \ldots , u_{n} $ and $-\infty < t_{1} < \ldots < t_{n} $, $n \in \mathbb{R}$, by the stationarity of the increments of $L^{d}$ we get:
\begin{eqnarray*}
\sum_{i=1}^{n} u_{i} V^{d}(t_{i} +b) &=& \sum_{i=1}^{n} u_{i} \int_{-\infty}^{t_{i} + b} e^{\lambda(t_{i} + b -u) }dL^{d}(u) \\
&\stackrel{(d)}{=}& \sum_{i=1}^{n} u_{i} \int_{-\infty}^{t_{i}} e^{\lambda (t_{i} -u) }dL^{d}(u) \\
&=&\sum_{i=1}^{n} u_{i} V^{d}(t_{i}). 
\end{eqnarray*}
\qed
\end{proof}  
Using the results from \cite{mar-stel, mar2} already used in the reference  \cite{mar-jam} to construct long memory process based on CARMA process driven by Lévy processes. We will provide a spectral representation of the process $V^{d}$  given by (\ref{v}). 

\

 Let $L=\{ L(t) \}_{t \in \mathbb{R}}$ be a two sided square integrable Lévy process with $E[L(1)]=0$ and 
$E[L(1)^{2}]=\Sigma_{L}$, then there exists a random orthogonal measure $\Phi_{L}$ with spectral measure $F_{L}$ such that  $E[\Phi_{L}(\Delta)]=0$ for any bounded Borel set $\Delta$, 
$$F_{L}(dt) = \dfrac{\Sigma_{L}}{2\pi} dt.$$
Also, the random measure $\Phi_{L}$ is uniquely determined by 

\begin{equation}
\label{edefphi}
  \Phi_{L}([a,b)) = \int_{\mathbb{R}} \dfrac{e^{-i\alpha a } - e^{-i\alpha b }}{2\pi i \alpha} L(d\alpha),  
\end{equation}
for all $-\infty <a<b<\infty$. Moreover,
$$L(t) = \int_{\mathbb{R}} \dfrac{e^{i\alpha t} -1}{i \alpha} \Phi_{L}(d\alpha), \quad t \in \mathbb{R}.$$
Hence, for any function $f \in L^{2}(\mathbb{C})$,
\begin{equation}
\int_{\mathbb{R}} f(\alpha) \Phi_{L}(d\alpha) =   \dfrac{1}{2\pi} \int_{\mathbb{R}} \int_{\mathbb{R}} e^{-i\alpha t} f(\alpha) d\alpha L(dt) =  \dfrac{1}{\sqrt{2\pi}} \int_{\mathbb{R}} \hat{f}(t) L(dt),
 \end{equation}
 \begin{equation}
 \int_{\mathbb{R}} \hat{f}(t) L(dt) = \int_{\mathbb{R}} \int_{\mathbb{R}} e^{i\alpha t} \hat{f}(t)  dt \Phi_{L}(d\alpha) = \sqrt{2\pi} \int_{\mathbb{R}} f(\alpha) \Phi_{L}(d\alpha), 
 \end{equation}
 where 
 \begin{equation*}
 \hat{f}(t) = \dfrac{1}{\sqrt{2\pi}} \int_{\mathbb{R}} e^{-i\alpha t} f(\alpha) d\alpha \quad \mbox{and} \quad  f(\alpha)  = \dfrac{1}{\sqrt{2\pi}} \int_{\mathbb{R}} e^{i\alpha t} \hat{f}(t) d\alpha.  
 \end{equation*}

 \
 
 \begin{lemma}
 Let $V^{d}$  be a fLpOU with random coefficient given by (\ref{v}). Then, the spectral representation of $V^{d}$ is 
 \begin{equation}\label{spectral}
 V^{d}(t) = \int_{\mathbb{R}} e^{it\alpha} \dfrac{1}{i\alpha - \lambda} \Phi_{M}(d\alpha), \quad t\in \mathbb{R},
 \end{equation}
 where 
 $$\Phi_{M}([a,b]) = \int_{\mathbb{R}} \dfrac{e^{-ias} - e^{-ibs}}{2\pi is} dL^{d}(s).$$
\end{lemma}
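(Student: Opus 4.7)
The strategy is to reduce (\ref{spectral}) to a Fourier-transform duality between time-domain integrals against $L^{d}$ and frequency-domain integrals against the random spectral measure $\Phi_{M}$, mirroring the identities displayed right after (\ref{edefphi}) but with $L$ replaced by $L^{d}$ and $\Phi_{L}$ by $\Phi_{M}$. Throughout I would work conditionally on $\omega_{\lambda} \in \Omega_{\lambda}$, for which $\lambda<0$ is fixed (since $-\lambda \sim \Gamma(1-h,\alpha)$ is a.s.\ strictly positive), and rely on the independence of $\lambda$ and $L^{d}$.

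First I would set up the spectral machinery for $L^{d}$. Because $L^{d}$ is a zero-mean, square-integrable process with stationary increments, the construction of \cite{mar-stel, mar2} already invoked in the paper produces a random orthogonal measure which coincides with $\Phi_{M}$ as defined in the statement, and yields the Parseval-type identity
\begin{equation*}
\int_{\mathbb{R}} f(\alpha)\, \Phi_{M}(d\alpha) \;=\; \frac{1}{\sqrt{2\pi}} \int_{\mathbb{R}} \hat{f}(s)\, dL^{d}(s), \qquad f \in L^{2}(\mathbb{R},\mathbb{C}).
\end{equation*}
To verify this on the generating class $f=\mathbf{1}_{[a,b)}$, I would compute $\hat{\mathbf{1}_{[a,b)}}(s) = \frac{1}{\sqrt{2\pi}}\cdot\frac{e^{-ias}-e^{-ibs}}{is}$, substitute into the right-hand side, and recover exactly the defining formula for $\Phi_{M}([a,b))$ given in the statement. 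Linearity plus density of step functions in $L^{2}$, together with the $L^{2}$-isometry of $f\mapsto \int f\, dL^{d}$ (this is the content of Lemma \ref{lemma3} applied via Lemma \ref{lemma1}), then extends the identity to all $f\in L^{2}(\mathbb{R},\mathbb{C})$.

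Next I would apply this duality to $f_{t}(\alpha):=e^{it\alpha}/(i\alpha-\lambda)$, which lies in $L^{2}(\mathbb{R})$ since $|f_{t}(\alpha)|^{2}=1/(\alpha^{2}+\lambda^{2})$ is integrable. A contour-integration argument gives its Fourier transform: the integrand $e^{i\alpha(t-s)}/(i\alpha-\lambda)$ has a single simple pole at $\alpha=-i\lambda$, which lies in the upper half-plane because $-\lambda>0$. Closing in the upper half-plane when $t-s>0$ picks up the residue $-ie^{\lambda(t-s)}$, giving the value $2\pi e^{\lambda(t-s)}$, while closing in the lower half-plane when $t-s<0$ encircles no pole and yields $0$. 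Therefore
\begin{equation*}
\hat{f_{t}}(s) \;=\; \frac{1}{\sqrt{2\pi}} \int_{\mathbb{R}} \frac{e^{i\alpha(t-s)}}{i\alpha-\lambda}\, d\alpha \;=\; \sqrt{2\pi}\, e^{\lambda(t-s)}\, \mathbf{1}_{\{s<t\}}.
\end{equation*}
Plugging this into the Parseval identity above yields
\begin{equation*}
\int_{\mathbb{R}} e^{it\alpha}\,\frac{1}{i\alpha-\lambda}\, \Phi_{M}(d\alpha) \;=\; \frac{1}{\sqrt{2\pi}}\int_{\mathbb{R}} \sqrt{2\pi}\, e^{\lambda(t-s)}\,\mathbf{1}_{\{s<t\}}\, dL^{d}(s) \;=\; \int_{-\infty}^{t} e^{\lambda(t-s)}\, dL^{d}(s),
\end{equation*}
which is exactly $V^{d}(t)$ by (\ref{v}).

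The main obstacle is the first step: rigorously transplanting the spectral-representation identities, stated in the paper for a genuine Lévy process $L$, to the fLp $L^{d}$, which has no independent increments. The key observation that makes it go through is that the construction only uses stationarity of increments and finite second moments — precisely the structural properties that $L^{d}$ inherits from $L$ and which are established in the bullet points following (\ref{Kernel}). Once that spectral framework is in place and checked on indicator functions as sketched, the remainder of the proof is the residue computation in Step 2 and a one-line substitution.
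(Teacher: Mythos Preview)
Your proposal is correct and follows essentially the same approach as the paper: both establish the Fourier duality between time-domain integrals against $L^{d}$ and frequency-domain integrals against $\Phi_{M}$ (the paper by invoking Theorem~2.8, Remark~2.9 and (2.31) of \cite{mar2}, you by explicitly verifying the Parseval identity on indicators and extending by density), and both compute the same Fourier pair linking $e^{\lambda(t-u)}\mathbf{1}_{(-\infty,t)}(u)$ and $e^{it\alpha}/(i\alpha-\lambda)$. The only cosmetic difference is the direction of the computation: the paper evaluates $\int_{-\infty}^{t} e^{i\alpha u}e^{\lambda(t-u)}\,du = e^{it\alpha}/(i\alpha-\lambda)$ directly, whereas you invert via the residue calculation; and you are more explicit than the paper in flagging the need to transplant the spectral construction from $L$ to $L^{d}$, which the paper simply absorbs into its citation of \cite{mar2}.
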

 \begin{proof}
 By Theorem 2.8 in \cite{mar2}, we know that 
 $$L^{d}(t) = \int_{\mathbb{R}} (e^{i\alpha t} -1) (i\alpha)^{-(d+1)} \Phi_{L}(d\alpha), \quad t\in \mathbb{R}.$$
 Furthermore, following the proof of Theorem 2.8,  Remark 2.9 and equality (2.31) in the same reference, we can obtain 
 \begin{eqnarray*}
 V^{d}(t) &=&  \int_{\mathbb{R}} e^{\lambda (t-u) } 1_{(-\infty , t)} (u)dL^{d}(u) \\
 &=& \int_{\mathbb{R}} \int_{\mathbb{R}} e^{i\alpha u} e^{\lambda (t-u) } 1_{(-\infty , t)} (u) du  \Phi_{M}(d\alpha) \\
 &=& \int_{\mathbb{R}} e^{it\alpha} \dfrac{1}{i\alpha - \lambda} \Phi_{M}(d\alpha).
 \end{eqnarray*} 
 Then, the result is achieved. \qed
  \end{proof}
 
 \begin{corollary}
 With almost  no major effort, we can obtain, by Remark 2.9 in \cite{mar2}, that 
 \begin{equation*}
  V^{d}(t)  =  \int_{\mathbb{R}} e^{it\alpha} \dfrac{1}{i\alpha - \lambda_{k}} (i\alpha)^{-d}  \Phi_{L}(d\alpha), \quad t\in \mathbb{R}.
 \end{equation*}
 \end{corollary}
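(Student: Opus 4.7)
The plan is to build on the spectral representation of $V^d$ that was just established in terms of $\Phi_M$, and then rewrite $\Phi_M$ in terms of $\Phi_L$ using the fractional-integration relation between $L^d$ and $L$ recorded via Theorem 2.8 and Remark 2.9 in \cite{mar2}.

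First, I would take as given the identity
\begin{equation*}
V^{d}(t) = \int_{\mathbb{R}} e^{it\alpha}\,\dfrac{1}{i\alpha - \lambda}\,\Phi_{M}(d\alpha),
\end{equation*}
proved in the preceding Lemma (for each fixed $\omega_{\lambda}\in \Omega_{\lambda}$, since $\lambda$ and $L^{d}$ are independent). The goal is to substitute a representation of $\Phi_{M}$ in terms of $\Phi_{L}$.

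Next, I would invoke the formula
\begin{equation*}
L^{d}(t)=\int_{\mathbb{R}}(e^{i\alpha t}-1)(i\alpha)^{-(d+1)}\,\Phi_{L}(d\alpha),
\end{equation*}
which was already used in the proof of the previous lemma. Writing $L^{d}$ in two ways, first via $\Phi_{M}$ as $L^{d}(t)=\int_{\mathbb{R}}\frac{e^{i\alpha t}-1}{i\alpha}\Phi_{M}(d\alpha)$ in analogy with the definition of $\Phi_{L}$ for $L$, and then via $\Phi_{L}$ through the formula above, a comparison shows that for any reasonable test function $f$,
\begin{equation*}
\int_{\mathbb{R}} f(\alpha)\,\Phi_{M}(d\alpha)=\int_{\mathbb{R}} f(\alpha)(i\alpha)^{-d}\,\Phi_{L}(d\alpha),
\end{equation*}
i.e.\ $\Phi_{M}(d\alpha)=(i\alpha)^{-d}\Phi_{L}(d\alpha)$ in the $L^{2}$ sense. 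This identification is precisely Remark 2.9 in \cite{mar2}.

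Finally, I would apply this change of spectral measure with $f(\alpha)=e^{it\alpha}/(i\alpha-\lambda)$, noting that for $d\in(0,1/2)$ and $-\lambda>0$ the integrand $e^{it\alpha}(i\alpha-\lambda)^{-1}(i\alpha)^{-d}$ belongs to $L^{2}(\mathbb{C})$ (the $(i\alpha)^{-d}$ factor being locally square-integrable near $0$ since $2d<1$, and the $(i\alpha-\lambda)^{-1}$ factor providing decay at infinity), which justifies the substitution and yields
\begin{equation*}
V^{d}(t)=\int_{\mathbb{R}} e^{it\alpha}\,\dfrac{1}{i\alpha-\lambda}\,(i\alpha)^{-d}\,\Phi_{L}(d\alpha).
\end{equation*}
The only delicate point — and the step I expect to be the main obstacle, though \cite{mar2} absorbs most of it — is to justify the identification $\Phi_{M}(d\alpha)=(i\alpha)^{-d}\Phi_{L}(d\alpha)$ in a way that allows integrating the specific kernel $e^{it\alpha}/(i\alpha-\lambda)$; one must be careful about the branch of $(i\alpha)^{-d}$ (taken as $|\alpha|^{-d}e^{-id(\pi/2)\mathrm{sign}(\alpha)}$) and about the singularity at $\alpha=0$. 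Once this is granted, the corollary follows with essentially no further computation, as the statement advertises.
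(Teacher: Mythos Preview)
Your proposal is correct and matches the paper's intended route exactly: the paper gives no proof at all beyond citing Remark 2.9 in \cite{mar2}, and your argument simply makes explicit the identification $\Phi_{M}(d\alpha)=(i\alpha)^{-d}\Phi_{L}(d\alpha)$ and substitutes it into the spectral representation from the preceding lemma. The integrability check and the remark on the branch of $(i\alpha)^{-d}$ you include are more than the paper provides.
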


 \section{Gamma mixed fractional Lévy Ornstein - Uhlenbeck process}\label{aou}

In this section, first,  we will study a process defined by the aggregation of independent fLOUp with random coefficient. Then we study its limit process,  which  we call Gamma mixed fractional Lévy Ornstein - Uhlenbeck process. Some properties of this limit process are given, namely,  we give the characteristic function of the finite dimensional distributions of the process, we determinate its covariance, stationarity and long memory property; and finally,  we analyze the asymptotic behavior with respect to the parameter $\alpha$.  

\subsection{Aggregated fractional Lévy Ornstein-Uhlenbeck  process}
Consider a sequence of fLOUp with random coefficient $V_{k}^{d} = (V_{k}^{d}(t) , t \in \mathbb{R})$, $k \geq 1$ given by
\begin{equation}\label{vk}
V_k^{d}(t) = \int_{-\infty}^{t} e^{\lambda_k (t-u) }dL^{d}(u), \quad t \in \mathbb{R}, \quad k \geq 1, 
 \end{equation}
where $L^{d}$ is a fLp with $d \in (0,1/2)$  defined on $(\Omega_{L}, {\cal F}_{L}, \mathbb{P}_{L})$. The random variables $\lambda_{k}$ are assumed independent and identically distributed defined on the probability space $(\Omega_{\lambda}, {\cal F}_{\lambda}, \mathbb{P}_{\lambda})$, and for $k \geq 1$ we assume that $-\lambda_{k}$ follows a Gamma distribution with parameters $1-h$ and $\alpha$, i.e. $-\lambda_{k} \sim \Gamma(1-h, \alpha)$ with $h \in (0,1)$ and $\alpha >0$. Furthermore,  we also assume that the random variables $(\lambda_{k})_{k \geq 1}$ are independent from $L^{d}$.

\

The aggregated fractional Lévy Ornstein-Uhlenbeck  processs $Z_{m}^{d}=(Z_{m}^{d}(t), t  \in \mathbb{R} )$  is defined by 
\begin{equation}
\label{defaggreg}
Z_{m}^{d}(t) := \dfrac{1}{m} \sum_{k=1}^{m}  V_{k}^{d}(t), 
\end{equation}
for $m \geq 1$ and $d \in (0, 1/2)$. \\

\begin{lemma}\label{stationarity2}
For all $m\geq 1$, the aggregated fractional Lévy Ornstein-Uhlenbeck  processs $Z^{d}_m$  is stationary  and
$$E_{L}[(Z_{m}^{d}(t))^{2}]=\frac{C_{d}}{m^2} 2\Gamma(2d)\sum_{k=1}^m \sum_{j=1}^m \frac{1}{-(\lambda_{k} + \lambda_{j})(-\lambda_{k})^{2d}},$$ 
where $C_d$ is given by \eqref{eqcd}.
\end{lemma}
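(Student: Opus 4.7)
The lemma splits into two claims: (i) joint stationarity of $Z_m^d$ and (ii) the second-moment formula. For (i) I will adapt the single-process argument in Lemma~\ref{stationarity} to work simultaneously for all $V_k^d$, exploiting the fact that they share one underlying fLp $L^d$. For (ii) I will expand the square, apply Lemma~\ref{lemma2} to each cross-covariance $E_L[V_k^d(t)V_j^d(t)]$, and evaluate the resulting double integral using the same kind of substitutions as in the chain of equalities leading to \eqref{eqEV2}, the difference being that now the two exponentials have distinct decay rates $\lambda_k,\lambda_j$.

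\textbf{Stationarity.} Fix $b>0$ and $t_1<\dots<t_n$. In each integral defining $V_k^d(t_i+b)$ I make the change of variable $u\mapsto u+b$, so that
\[
V_k^d(t_i+b)\;=\;\int_{-\infty}^{t_i}e^{\lambda_k(t_i-u)}\,dL^d(u+b),\qquad k=1,\dots,m.
\]
The \emph{same} shift is applied to every $k$, and the shifted process $\{L^d(u+b)-L^d(b)\}_{u\in\mathbb{R}}$ has the same finite-dimensional distributions as $L^d$ by the stationarity of its increments (the additive constant $L^d(b)$ is irrelevant for the stochastic differential). Consequently the full vector $(V_1^d(t_i+b),\dots,V_m^d(t_i+b))_{i=1,\dots,n}$ has the same joint law as $(V_1^d(t_i),\dots,V_m^d(t_i))_{i=1,\dots,n}$, and stationarity of $Z_m^d$ follows since it is a deterministic linear function of this vector.

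\textbf{Second moment.} By linearity,
\[
E_L\!\left[(Z_m^d(t))^2\right]=\frac{1}{m^2}\sum_{k=1}^{m}\sum_{j=1}^{m}E_L\!\left[V_k^d(t)V_j^d(t)\right].
\]
Applying Lemma~\ref{lemma2} to the representation \eqref{vk}, then performing the substitutions $u\mapsto t-u$, $v\mapsto t-v$ as in \eqref{eqEV2}, I reduce each cross-covariance to
\[
E_L\!\left[V_k^d(t)V_j^d(t)\right]=C_d\int_0^\infty\!\!\int_0^\infty e^{\lambda_k u+\lambda_j v}\,|u-v|^{2d-1}\,du\,dv.
\]
I then split the square into the regions $\{u>v\}$ and $\{u<v\}$ and in the first set the change of variable $w=u-v$, $v=v$ yields
\[
\int_0^\infty\!\!\int_0^\infty e^{\lambda_k(w+v)+\lambda_j v}w^{2d-1}\,dw\,dv=\frac{\Gamma(2d)}{(-\lambda_k)^{2d}}\cdot\frac{1}{-(\lambda_k+\lambda_j)},
\]
using $\int_0^\infty e^{-\mu w}w^{2d-1}dw=\Gamma(2d)/\mu^{2d}$ for $\mu>0$. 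The second region gives the analogous expression with $\lambda_k$ and $\lambda_j$ swapped. Summing the two contributions and then over $k,j$, the symmetrization $\sum_{k,j}[(-\lambda_k)^{-2d}+(-\lambda_j)^{-2d}]/[-(\lambda_k+\lambda_j)]=2\sum_{k,j}1/[-(\lambda_k+\lambda_j)(-\lambda_k)^{2d}]$ delivers the stated identity.

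\textbf{Main obstacle.} The only non-routine point is that for $k\neq j$ the integrand is no longer symmetric in $(u,v)$, so the one-step simplification used in \eqref{eqEV2} (where $e^{\lambda u}e^{\lambda v}$ combined into $e^{2\lambda u}e^{-\lambda v}$) is unavailable. The splitting into $\{u>v\}\cup\{u<v\}$ plus symmetrization at the end is exactly what replaces that simplification; provided this bookkeeping is carried out carefully, the Gamma-function integrals produce precisely the stated expression, with the factor $2$ in front of $\Gamma(2d)$ coming from the combination of the two regions after symmetrization.
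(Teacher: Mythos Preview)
Your proposal is correct and follows essentially the same route as the paper: stationarity via the common shift in $L^d$ (the paper simply cites Lemma~\ref{stationarity}, while you spell out the joint version), and the second moment via Lemma~\ref{lemma2}, the change $u\mapsto t-u$, $v\mapsto t-v$, a split into $\{u>v\}\cup\{u<v\}$, reduction to Gamma integrals, and the final symmetrization. The only cosmetic difference is that the paper keeps $u$ and substitutes $v'=u-v$ followed by a Fubini step, whereas your substitution $w=u-v$ with $v$ fixed factors the double integral directly; both lead to the same expression.
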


\begin{proof}
The similarity follows by Lemma \ref{stationarity}.

From Lemma \ref{lemma2} and the change of variables $\tilde{u} = t - u $ and $\tilde{v} = t - v$, we can see 

\begin{eqnarray*}
&& E_{L}[(Z_{m}^{d}(t))^{2}]\\
&=& \frac{C_{d}}{m^2} \int_{-\infty}^{t} \int_{-\infty}^{t} \sum_{k=1}^m \sum_{j=1}^m e^{\lambda_{k}(t-u) } e^{\lambda_{j}(t-v) } \vert u-v \vert^{2d-1}dv du  \notag \\
&=& \frac{C_{d}}{m^2} \sum_{k=1}^m \sum_{j=1}^m \int_{0}^{\infty} \int_{0}^{\infty} e^{\lambda_{k}u+ \lambda_{j}v }  \vert u-v \vert^{2d-1}  dv du\notag \\
&=&  \frac{C_{d}}{m^2} \sum_{k=1}^m \sum_{j=1}^m \left( \int_{0}^{\infty} \int_{0}^{u}  e^{\lambda_{k}u+\lambda_{j}v }  ( u-v )^{2d-1}dvdu \right. + \left. \int_{0}^{\infty} \int_{u}^{\infty}  e^{\lambda_{k}u+\lambda_{j}v }  ( v-u )^{2d-1}dvdu \right)\notag \\
&=&  \frac{C_{d}}{m^2} \sum_{k=1}^m \sum_{j=1}^m \left( \int_{0}^{\infty}  e^{ (\lambda_{k} + \lambda_{j}) u}   \int_{0}^{u}  e^{-\lambda_{j}v }  v ^{2d-1} dv du \right. 
 + \left. \int_{0}^{\infty}  e^{ (\lambda_{k} + \lambda_{j}) u}   \int_{0}^{\infty}  e^{\lambda_{j}v }  v ^{2d-1} dv du \right)\notag \\
&=& \frac{C_{d}}{m^2} \sum_{k=1}^m \sum_{j=1}^m  \left(\int_{0}^{\infty}  e^{\lambda_{j}v }  v ^{2d-1}\int_v^\infty e^{(\lambda_k+\lambda_j)u}dudv \right. 
 + \left.  \int_{0}^{\infty}  e^{\lambda_{j}v }  v ^{2d-1}\int_0^\infty e^{(\lambda_k+\lambda_j)u}dudv \right) \notag \\
&=& \frac{C_{d}}{m^2} \sum_{k=1}^m \sum_{j=1}^m \frac{1}{-(\lambda_{k} + \lambda_{j})} \left(\int_{0}^{\infty}  e^{\lambda_{k}v }  v ^{2d-1}dv +   \int_{0}^{\infty}  e^{\lambda_{j} v}v^{2d-1} dv \right) \notag \\
&=& \frac{C_{d}}{m^2} \Gamma(2d)\sum_{k=1}^m \sum_{j=1}^m \frac{1}{-(\lambda_{k} + \lambda_{j})} \left(\dfrac{1}{(-\lambda_{k})^{2d} } + \dfrac{1}{(-\lambda_{j})^{2d} }\right)\\
&=& \frac{C_{d}}{m^2} 2\Gamma(2d)\sum_{k=1}^m \sum_{j=1}^m \frac{1}{-(\lambda_{k} + \lambda_{j})(-\lambda_{k})^{2d}}. 
\end{eqnarray*}
\qed
\end{proof}

\subsection{Limit of aggregated fractional Lévy Ornstein-Uhlenbeck  process}
By (\ref{v}) we can see that $Z_{m}^{d}$ can be written as 
  \begin{eqnarray}
\label{zm}
Z_{m}^{d}(t) &=&  \int_{-\infty}^{t}  \dfrac{1}{m} \sum_{k=1}^{m} e^{\lambda_{k}(t-u) }dL^{d}(u) \nonumber \\
&=& \int_{-\infty}^{t} f_{m}(t-u)dL^{d}(u). 
\end{eqnarray}
Moreover, by the law of large numbers we get   
 \begin{equation}
\dfrac{1}{m} \sum_{k=1}^{m} e^{\lambda_{k}(t-u) } \rightarrow  E_{\lambda}\left[ e^{\lambda_{1}(t-u)} \right]= \left(\dfrac{\alpha}{\alpha + t -u} \right)^{1-h},
\end{equation}
as $m \rightarrow \infty$, where the convergence is $P_{\lambda}$ almost surely.\\

Due to the previous result, a natural candidate to be the limit of the aggregated fractional Lévy Ornstein-Uhlenbeck  process is the process $Z^d=(Z^{d}(t), t\in \mathbb{R})$ given by  
\begin{eqnarray}
Z^{d}(t) &:=&  \int_{-\infty}^{t}  \left(\dfrac{\alpha}{\alpha + t -u} \right)^{1-h}    dL^{d}(u) \nonumber \\
 &=&  \int_{-\infty}^{t} g(t-u)  dL^{d}(u), \label{z}
\end{eqnarray}
where 
$g(t)= \left(\dfrac{\alpha}{\alpha + t } \right)^{1-h} $.\\

We can see that the process $Z^d$ belongs to $ L^{2}(\Omega_{L})$ if $0<h<1/2- d$. Actually, by Lemma \ref{lemma2} and applying consecutively the change of variables $\tilde{u} = t-u$,   $\tilde{v} = t-v$, $x=u/\alpha$, $y=v/\alpha$, $r=1/(1+x)$, $s=1/(1+y)$, and $\hat{u}=r/s$, we obtain
\begin{eqnarray*}
E_{L}\left[ (Z^{d}(t))^2  \right] &=& C_{d} \int_{-\infty}^{t} \int_{-\infty}^{t} g(t-u) g(t-v) \vert u-v \vert^{2d-1} dvdu \\
& =& C_{d} \int_{0}^{\infty} \int_{0}^{\infty} g(u) g(v) \vert u-v \vert^{2d-1} dvdu\\
 &=& C_{d} \int_{0}^{\infty} \int_{0}^{\infty} \left(1 + \dfrac{u}{\alpha} \right)^{h-1}  \left(1 + \dfrac{v}{\alpha} \right)^{h-1} \vert u-v \vert^{2d-1} dvdu \\
&=& C_{d} \alpha^{2d+1} \int_{0}^{\infty} \int_{0}^{\infty} \left(1 +x \right)^{h-1}  \left(1 + y \right)^{h-1} \vert x-y \vert^{2d-1} dydx \\
&=& C_{d} \alpha^{2d+1} \int_{0}^{1} \int_{0}^{1} r^{1-h}  s^{1-h}  (rs)^{-2}\left\vert \dfrac{1}{r} - \dfrac{1}{s}\right\vert^{2d-1} drds \\
&=& 2C_{d} \alpha^{2d+1} \int_{0}^{1} s^{-1-h} \int_{0}^{s} r^{-h-2d} \left( 1 - \dfrac{r}{s}\right)^{2d-1} drds \\
&=& 2  C_{d} \alpha^{2d+1} \int_{0}^{1} s^{-2(h+d)} \int_{0}^{1} \hat{u}^{-h-2d} (1-\hat{u})^{2d-1} d\hat{u}  ds \\
&=& \dfrac{ 2  C_{d} \alpha^{2d+1} }{1-2(h+d)}B(1-h-2d,2d) = C_{\alpha, h,d}.
\end{eqnarray*}
Clearly, from the last line emerges the condition $0<h<1/2- d$.\\

Now, we present the main result of this section related to the limit of the aggregated fractional Lévy Ornstein-Uhlenbeck  process. This result is analogous to that obtained in  Theorem 3 in \cite{es1} for the aggregated fractional Ornstein-Uhlenbeck processes.

\begin{theorem}
Let $Z_{m}^{d}$ and $Z^{d}$ be defined by (\ref{defaggreg}) and (\ref{z}), respectively. Assume that $0<h<1/2- d$. Then $P_{\lambda}$-a.s., for every $t \in \mathbb{R}$ 
\begin{equation}
\label{eqteo1}
Z_{m}^{d}(t) \longrightarrow Z^{d}(t), \qquad \mbox{in} \quad  L^{2}(\Omega_{L}) 
\end{equation} 
and for $a,b \in \mathbb{R}$ 
\begin{equation}
\label{eqteo2}
Z_{m}^{d}(t) \longrightarrow Z^{d}(t), \qquad \mbox{weakly \ in} \ \ C[a, b] \ \ \mbox{under} \quad P_{L}, 
\end{equation}
as $m \rightarrow \infty$.  
\end{theorem}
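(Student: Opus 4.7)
The strategy is to prove the two claims in sequence: first the $L^2(\Omega_L)$ convergence for fixed $t$, then the weak convergence on $C[a,b]$. For \eqref{eqteo1}, the plan is to expand
\[
E_L\bigl[(Z_m^d(t)-Z^d(t))^2\bigr]=E_L[(Z_m^d(t))^2]-2E_L[Z_m^d(t)Z^d(t)]+E_L[(Z^d(t))^2]
\]
and show that, $P_\lambda$-a.s., the first two terms both converge to $E_L[(Z^d(t))^2]=C_{\alpha,h,d}$. Since $E_L[(Z_m^d(t))^2]$ is given in closed form by Lemma \ref{stationarity2} as a V-statistic of order two in the i.i.d.\ sample $(\lambda_k)_{k\ge 1}$, the classical SLLN for V-statistics will yield its a.s.\ convergence to $2C_d\Gamma(2d)\,E_\lambda[\frac{1}{-(\lambda_1+\lambda_2)(-\lambda_1)^{2d}}]$, and a Fubini computation running the same change of variables as the one used to produce $C_{\alpha,h,d}$ identifies this limit with $C_{\alpha,h,d}$ itself. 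The integrability required for the V-statistic SLLN follows from $h<1/2-d$: a direct check shows $E[(-\lambda_1)^{-2d}]<\infty$ since $h+2d<1$, and bounding $1/(x+y)\le 1/x$ on $(0,\infty)^2$ gives the joint finiteness.

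For the cross term I would apply Lemma \ref{lemma2} and obtain
\[
E_L[Z_m^d(t)Z^d(t)]=\frac{C_d}{m}\sum_{k=1}^m\int_0^\infty\!\!\int_0^\infty e^{\lambda_k u}\,g(v)\,|u-v|^{2d-1}\,dv\,du,
\]
after exchanging the $k$-sum with the $(u,v)$-integral (justified since $0\le e^{\lambda_k u}\le 1$ and $\int\!\!\int g(u)g(v)|u-v|^{2d-1}du\,dv<\infty$). The summands are i.i.d.\ measurable functions of the $\lambda_k$ with common mean $\int\!\!\int g(u)g(v)|u-v|^{2d-1}du\,dv = C_{\alpha,h,d}/C_d$, so the ordinary SLLN delivers the required convergence, completing \eqref{eqteo1}.

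For \eqref{eqteo2}, I would combine finite-dimensional convergence with tightness on $C[a,b]$. Finite-dimensional convergence reduces to part (i): any linear combination $\sum_i c_i Z_m^d(t_i)$ is itself a stochastic integral against $L^d$ of the kernel $u\mapsto\sum_i c_i f_m(t_i-u)$, and the same V-statistic/SLLN argument applied to the corresponding Gram matrix yields $L^2$-convergence, hence convergence in distribution. For tightness, I would use Kolmogorov's criterion, writing the increment as $Z_m^d(t)-Z_m^d(s)=\int(f_m(t-u)-f_m(s-u))dL^d(u)$ and invoking Lemma \ref{lemma2} to bound its second moment by the corresponding deterministic double integral. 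Uniform control in $m$ rests on the pointwise bounds $0\le f_m\le 1$ and on the Hölder-type regularity of the kernel, inherited from each $e^{\lambda_k\cdot}$ and preserved under averaging; higher moments come from standard $L^q$-estimates for integrals against a mean-zero square-integrable Lévy process.

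The principal obstacle I anticipate is avoiding a direct dominated-convergence attack on $\int\!\!\int(f_m-g)(u)(f_m-g)(v)|u-v|^{2d-1}\,du\,dv$, which fails because the singular kernel $|u-v|^{2d-1}$ is not integrable against the trivial bound $|f_m-g|\le 2$. The V-statistic route sidesteps this, but only because Lemma \ref{stationarity2} provides an \emph{exact} algebraic expression; no such clean form is available for the tightness step of \eqref{eqteo2}, where the main delicate point will be producing the uniform-in-$m$ modulus-of-continuity bound rather than an $m$-dependent one.
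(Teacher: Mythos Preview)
Your route for \eqref{eqteo1} differs from the paper's: the paper writes $E_L[(Z_m^d(t)-Z^d(t))^2]=C_d\int_0^\infty\!\int_0^\infty(f_m-g)(u)(f_m-g)(v)|u-v|^{2d-1}\,du\,dv$ directly and imports the dominated-convergence argument of \cite{es1}, i.e.\ it does attack the difference head-on. Your three-term/V-statistic alternative is legitimate and self-contained, but two steps are wrong as written. First, the bound $1/(x+y)\le 1/x$ you invoke leads to $E_\lambda[(-\lambda_1)^{-(2d+1)}]$, which \emph{diverges} at the origin (the density of $-\lambda_1\sim\Gamma(1-h,\alpha)$ behaves like $x^{-h}$ there, so $x^{-h-2d-1}$ is not locally integrable). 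The correct check is to set $s=x+y$, $t=x/s$, which yields $\int_0^\infty s^{-2h-2d}e^{-\alpha s}\,ds\cdot B(1-h-2d,1-h)$ and recovers exactly the hypothesis $h<\tfrac12-d$. Second, the diagonal terms $\psi(\lambda_k,\lambda_k)=\tfrac12(-\lambda_k)^{-(2d+1)}$ of your V-statistic are \emph{not} in $L^1(\Omega_\lambda)$, so the textbook V-statistic SLLN does not apply; you need a separate Marcinkiewicz--Zygmund step (a moment of order $>\tfrac12$ exists because $\tfrac{1-h}{2d+1}>\tfrac12$ under $h<\tfrac12-d$) to kill $m^{-2}\sum_k\psi(\lambda_k,\lambda_k)$. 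Your Fubini worry for the cross term is a non-issue: the $k$-sum is finite.

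For \eqref{eqteo2} your plan is too vague to go through. Saying the H\"older regularity is ``inherited from each $e^{\lambda_k\cdot}$ and preserved under averaging'' hides the actual difficulty, which is uniformity in $m$ when some $\lambda_k$ are arbitrarily close to $0$. The paper's device is to use stationarity to reduce to $Z_m^d(t-s)-Z_m^d(0)$ and then split
\[
Z_m^d(t-s)-Z_m^d(0)=\underbrace{\frac1m\sum_k(e^{\lambda_k(t-s)}-1)\!\int_{-\infty}^0 e^{-\lambda_ku}\,dL^d(u)}_{I_1}\;+\;\underbrace{\int_0^{t-s}\frac1m\sum_ke^{\lambda_k(t-s-u)}\,dL^d(u)}_{I_2}.
\]
In $I_2$ the bound $e^{\lambda_k\cdot}\le1$ gives $E_L[I_2^2]\le C(t-s)^{1+2d}$ directly. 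In $I_1$ the factor $(e^{\lambda_k(t-s)}-1)$ provides the small increment $|e^{\lambda_k(t-s)}-1|\le(-\lambda_k)(t-s)$, and the remaining random coefficient $m^{-2}\sum_{k_1,k_2}(\cdots)$ is bounded $P_\lambda$-a.s.\ by an SLLN-type argument (cf.\ \cite{dou}). Second moments already give exponent $1+2d>1$, so no higher $L^q$-estimates are needed.
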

\begin{proof}
By  Lemma \ref{lemma2}, (\ref{zm}) and (\ref{z}), we have
\begin{eqnarray*}
E_{L}[(Z^{d}_{m}(t) - Z^{d}(t))^{2}] &=& E_{L}\left[ \left( \int_{-\infty}^{t} \left[ \dfrac{1}{m} \sum_{k=1}^{m} e^{\lambda_{k}(t-u)} - \left( \dfrac{\alpha}{\alpha + t-u} \right)^{h}     \right] dL^{d}(u)\right)^{2}\right] \\
 &=& E_{L}\left[ \left( \int_{-\infty}^{t} \left[ f_{m}(t-u) -  g(t-u)   \right] dL^{d}(u)\right)^{2}\right]\\
 &=& C_{d} \int_{0}^{\infty} \int_{0}^{\infty} (f_{m}(u) -  g(u))(f_{m}(v) -  g(v))) \vert u -v \vert^{2d-1}  dudv.
\end{eqnarray*}
Now, following the lines of the proof of Theorem 3 (first part) in \cite{es1} and the fact that $0<h<1/2- d$, we can obtain that $P_{\lambda}$-a.s. 
\begin{equation*}
\lim\limits_{m \rightarrow \infty} E_{L}[(Z^{d}_{m}(t) - Z^{d}(t))^{2}]  = 0.
\end{equation*}

Hence we have the $P_{\lambda}$-a.s. convergence of the sequence $(Z^d_m)_{m\geq 1}$ in $L^2(\Omega_L)$, which in turn implies the $P_{\lambda}$-a.s. convergence of the finite dimensional distributions.

In order to prove the weak convergence, we only need to prove  the tightness.  Due Theorem 12.3 in \cite{billi},  it is sufficient to show that 
$E_{L}[(Z_{m}^{d}(t) - Z_{m}^{d}(s))^{2} ] \leq C(t-s)^{1+\rho}$, for $T>0$ fix and $0 \leq s <t \leq T$  where $\rho >0$ and C may depend uppon parameters.  

From Lemma \ref{stationarity2} and \eqref{zm}
\begin{eqnarray*}
Z_{m}^{d}(t) - Z_{m}^{d}(s) &\stackrel{(d)}{=}& Z_{m}^{d}(t-s) -Z_{m}^{d}(0)\\
 &\stackrel{(d)}{=}& \dfrac{1}{m} \sum_{k=1}^{m} (e^{\lambda_{k}(t-s)}-1) \int_{-\infty}^{0} e^{-u\lambda_{k}} dL^{d}(u) + \int_{0}^{t-s}  \dfrac{1}{m} \sum_{k=1}^{m} e^{\lambda_{k}(t-s-u)} dL^{d}(u) \\
&=& I_{1} + I_{2}.
\end{eqnarray*}
Clearly, this implies 
\begin{equation*}
E_{L}[(Z_{m}^{d}(t) - Z_{m}^{d}(s))^{2} ] \leq 2 E_{L}[I_{1}^{2}] +  2 E_{L}[I_{2}^{2}].
\end{equation*}
We will estimate every term separately. In fact, by Lemma \ref{lemma2} we have
\begin{eqnarray*}
E_{L}[I_{1}^{2}] & = & \dfrac{1}{m^2}\sum_{k_{1}, k_{2} =1}^{m}   (e^{\lambda_{k_{1}}(t-s)}-1) (e^{\lambda_{k_{2}}(t-s)}-1) E_{L} \left[  \int_{-\infty}^{0} e^{-u\lambda_{k_{1}}} dL^{d}(u)  \int_{-\infty}^{0} e^{-v\lambda_{k_{1}}} dL^{d}(v)   \right] \\
&=& \dfrac{1}{m^2}\sum_{k_{1}, k_{2} =1}^{m}   (e^{\lambda_{k_{1}}(t-s)}-1) (e^{\lambda_{k_{2}}(t-s)}-1)   \int_{-\infty}^{0}  \int_{-\infty}^{0}  e^{-u\lambda_{k_{1}}}  e^{-v\lambda_{k_{1}}} \vert u-v\vert^{2d-1} dudv. 
\end{eqnarray*}  
By similar arguments to those  of \cite{dou}, the law of large number and the fact that $0<h<1/2-d$
\begin{equation}\label{i1}
 E_{L}[I_{1}^{2}] \leq C_{d,\lambda} (t-s)^{2}.
\end{equation}

With respect to $I_{2}$,  by Lemma \ref{lemma2} and making the change of variable $z=u-v$, we get 
\begin{eqnarray*}
E_{L}[I_{2}^{2}] & = & E_{L} \left[  \int_{0}^{t-s}  \dfrac{1}{m} \sum_{k=1}^{m} e^{\lambda_{k}(t-s-u)} dL^{d}(u)  \cdot  \int_{0}^{t-s}  \dfrac{1}{m} \sum_{k=1}^{m} e^{\lambda_{k}(t-s-v)} dL^{d}(v)    \right]\\
 & = &  C_{d} \dfrac{1}{m^{2}} \sum_{k_{1},k_{2}=1}^{m} \int_{0}^{t-s}\int_{0}^{t-s} e^{\lambda_{k_{1}}(t-s-u)}  e^{\lambda_{k_{2}}(t-s-v)}     \vert u-v\vert^{2d-1} dudv \\
& = &  C_{d} \dfrac{1}{m^{2}} \sum_{k_{1},k_{2}=1}^{m}  e^{ \lambda_{k_{1}}(t-s)} e^{ \lambda_{k_{2}}(t-s)} \int_{0}^{t-s}\int_{0}^{t-s} e^{-\lambda_{k_{1}}u - \lambda_{k_{2}}v}   \vert u-v\vert^{2d-1} dvdu \\
& = &  2 C_{d} \dfrac{1}{m^{2}} \sum_{k_{1},k_{2}=1}^{m} e^{ \lambda_{k_{1}}(t-s)} e^{ \lambda_{k_{2}}(t-s)} \int_{0}^{t-s} e^{-\lambda_{k_{1}}u}   \int_{0}^{u} e^{-\lambda_{k_{2}}v}   ( u-v)^{2d-1} dvdu\\
& = &   2 C_{d} \dfrac{1}{m^{2}} \sum_{k_{1},k_{2}=1}^{m}  e^{ \lambda_{k_{1}}(t-s)} e^{ \lambda_{k_{2}}(t-s)} \int_{0}^{t-s}e^{-u (\lambda_{k_{1}} + \lambda_{k_{2}} )}   \int_{0}^{u} e^{\lambda_{k_{2}}z}   z^{2d-1} dvdu \\
& \leq & 2 C_{d} \dfrac{1}{m^{2}} \sum_{k_{1},k_{2}=1}^{m}    e^{ \lambda_{k_{1}}(t-s)} e^{ \lambda_{k_{2}}(t-s)}    \int_{0}^{t-s}e^{-\lambda_{k_{1}}u}      u^{2d} du. \\
& \leq & 2 C_{d} (t-s)^{1+2d} \dfrac{1}{m} \sum_{k_{2}=1}^{m}   e^{ \lambda_{k_{2}}(t-s)}. 
\end{eqnarray*}
 Then,  using the fact that $-\lambda_{k} \sim \Gamma(1-h, \alpha)$ with $h \in (0,1)$ and $\alpha >0$, and $t>s$. We obtain
\begin{equation}\label{i2}
E_{L}[I_{2}^{2}]  \leq  2 C_{d} (t-s)^{1+2d}.
\end{equation}
Finally, inequalities (\ref{i1}) and (\ref{i2}) imply

\begin{equation*}
E_{L}[(Z_{m}^{d}(t) - Z_{m}^{d}(s))^{2} ] \leq C_{T, d, \lambda, \alpha, h} (t-s)^{1+2d}.
\end{equation*}
Since $d \in (0,1/2)$ the result is achieved. 
\qed
\end{proof}
\subsection{Properties of $\mathbf{Z^d}$}

Now,  we study some  properties of the limit process $Z^{d}$.  

\begin{prop}\label{long-memory}
Let $Z^{d}$ be given by (\ref{z}) with $h \in (0, 1/2 -d)$. Then, the characteristic function  of  $Z^{d}(t_{1}), Z^{d}(t_{2}), \ldots , Z^{d}(t_{m})$ with $t_{1} < t_{2}< \ldots < t_{m} $ is given by 
\begin{equation*}
E_{L}\left[ \exp\left(  i \sum_{j=1}^{m} \theta_{j} Z^{d}(t_{j}) \right) \right] =  \exp\left(  \int_{\mathbb{R}} 
\psi\left[      \sum_{j=1}^{m} \theta_{j} \tilde{f}_{t_{j},h, \alpha} (s) \right]
 ds \right) ,
\end{equation*}
where 
$$ \tilde{f}_{t_{j},h,\alpha}(s) =  d  \int_{-\infty}^{t_{j}} \left(\dfrac{\alpha}{\alpha + t_{j} -v} \right)^{1-h} ( v -s )_{+}^{d-1}  dv.$$
\end{prop}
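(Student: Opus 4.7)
The plan is to reduce the characteristic function of $Z^d$ to one for a stochastic integral against the underlying Lévy process $L$ and then invoke Lemma \ref{lemma3}, which already gives the finite dimensional characteristic function of vectors of the form $(\int f_{t_1}dL, \dots, \int f_{t_m}dL)$.

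First I would rewrite $Z^d(t_j)$ as an integral over all of $\mathbb{R}$ by setting $h_{t_j}(u) := g(t_j-u)\mathbf{1}_{(-\infty,t_j)}(u)$ with $g(y) = (\alpha/(\alpha+y))^{1-h}$, so that $Z^d(t_j) = \int_{\mathbb{R}} h_{t_j}(u)\,dL^d(u)$. The second-moment computation already carried out in the paper (showing $E_L[(Z^d(t))^2] = C_{\alpha,h,d} < \infty$ under $0<h<1/2-d$) essentially verifies that $h_{t_j}$ and $|h_{t_j}|$ lie in the space $H$, so Lemmas \ref{lemma1} and \ref{lemma2} are applicable.

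Next, by Lemma \ref{lemma1}, for each $j$ I would transfer the integrator from $dL^d$ to $dL$, obtaining
\begin{equation*}
Z^d(t_j) \;=\; \int_{\mathbb{R}} \bigl(I^d_-\,h_{t_j}\bigr)(s)\,dL(s),
\end{equation*}
and then compute the right-sided Riemann–Liouville fractional integral explicitly. Since $h_{t_j}$ is supported on $(-\infty,t_j)$, the defining integral $\frac{1}{\Gamma(d)}\int_s^{\infty} h_{t_j}(v)(v-s)^{d-1}\,dv$ collapses to an integral over $v\in(s,t_j)$, which after inserting the factor $(v-s)_+^{d-1}$ becomes $\int_{-\infty}^{t_j} g(t_j-v)(v-s)_+^{d-1}\,dv$ up to the normalising constant in front. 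This yields (modulo the normalisation) the kernel $\tilde f_{t_j,h,\alpha}$ appearing in the statement.

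Finally, using linearity of the stochastic integral against $L$, I would write $\sum_{j=1}^m \theta_j Z^d(t_j) = \int_{\mathbb{R}} \sum_{j=1}^m \theta_j\,\tilde f_{t_j,h,\alpha}(s)\,dL(s)$, and then apply the characteristic function formula from Lemma \ref{lemma3} directly with $f_{t_j} = \tilde f_{t_j,h,\alpha}$, which produces exactly the expression $\exp\bigl(\int_{\mathbb{R}} \psi(\sum_j \theta_j \tilde f_{t_j,h,\alpha}(s))\,ds\bigr)$ claimed in the proposition. The main technical obstacle is not the algebra but the verification that $h_{t_j}\in H$ and that $\sum_j\theta_j h_{t_j}\in H$ so that Lemma \ref{lemma1} and Lemma \ref{lemma3} apply simultaneously to the linear combination; this is controlled by the $L^2$-bound already established for $E_L[(Z^d(t))^2]$ together with Cauchy–Schwarz on the mixed cross terms, so no new estimate is needed beyond what is used to show the limit process is in $L^2(\Omega_L)$.
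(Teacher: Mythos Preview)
Your proposal is correct and follows essentially the same approach as the paper: the paper's proof is a single sentence invoking Lemma~\ref{lemma3} (and Proposition~3.3 in \cite{klau}) together with the fact that $Z^d\in L^2(\Omega_L)$ for $h\in(0,1/2-d)$, and you have simply made explicit the intermediate step---using Lemma~\ref{lemma1} to pass from $dL^d$ to $dL$ and thereby identify the kernel $\tilde f_{t_j,h,\alpha}$ as the right-sided Riemann--Liouville fractional integral of $h_{t_j}$. Your remark ``modulo the normalisation'' is apt: the factor $d$ in the statement should be $1/\Gamma(d)$ if one traces through the definition of $I^d_-$, so the paper appears to carry a typo there.
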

\begin{proof}
The result follows by Lemma \ref{lemma3} in Section 2 (also see Proposition 3.3 in \cite{klau}) and the fact that $Z^{d}$ belongs to $L^{2}(\Omega_{L})$ for $h \in (0, 1/2 -d)$.  
\qed
\end{proof}

\begin{prop}
Let $Z^{d}$ be given by (\ref{z}) with $h \in (0, 1/2 -d)$. Then, $Z^{d}$ is a stationary process with long memory.
\end{prop}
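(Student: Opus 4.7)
The plan is to establish the two claims separately, with stationarity being immediate and long memory requiring an asymptotic analysis of the covariance.

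For stationarity, the cleanest route is to inherit the property from the pre-limit processes. Each $Z_m^{d}$ is stationary by Lemma \ref{stationarity2}, and the preceding theorem shows that $Z_m^{d}\to Z^{d}$ in $L^2(\Omega_L)$ for each fixed $t$, $P_\lambda$-a.s. Since $L^2$ convergence at each time point yields convergence of finite-dimensional distributions (using the linearity of the integral and Lemma \ref{lemma3}), stationarity of $Z_m^{d}$ transfers to $Z^{d}$. A direct verification is also available: write $Z^{d}(t+b)=\int_{-\infty}^{t}g(t-v)\,dL^{d}(v+b)$ via the substitution $v=u-b$, and invoke the stationary increments of $L^{d}$ to match the finite-dimensional distributions with those of $Z^{d}(t)$, exactly as in Lemma \ref{stationarity}.

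For long memory, the plan is to extract the tail rate of the covariance $r(\tau):=\mathrm{Cov}(Z^{d}(0),Z^{d}(\tau))$ and show $\sum_n r(n)=\infty$. By stationarity and Lemma \ref{lemma2}, after the change of variables $\tilde u=-u$, $\tilde v=\tau-v$,
\begin{equation*}
r(\tau)=C_d\int_0^{\infty}\!\!\int_0^{\infty} g(u)\,g(v)\,|\tau+u-v|^{2d-1}\,du\,dv,
\end{equation*}
where $g(x)=(\alpha/(\alpha+x))^{1-h}$. Rescaling $u=\tau x$, $v=\tau y$ gives
\begin{equation*}
r(\tau)=C_d\,\tau^{2d+1}\int_0^{\infty}\!\!\int_0^{\infty} g(\tau x)g(\tau y)\,|1+x-y|^{2d-1}\,dx\,dy,
\end{equation*}
and since $g(\tau x)\sim \alpha^{1-h}(\tau x)^{h-1}$ as $\tau\to\infty$, I would apply dominated convergence to obtain
\begin{equation*}
r(\tau)\sim K_{\alpha,h,d}\,\tau^{2(d+h)-1},\qquad \tau\to\infty,
\end{equation*}
with $K_{\alpha,h,d}=C_d\,\alpha^{2(1-h)}\int_0^\infty\!\int_0^\infty x^{h-1}y^{h-1}|1+x-y|^{2d-1}dx\,dy$. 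Under $h\in(0,1/2-d)$, the exponent $2(d+h)-1$ lies in $(-1,0)$, so $r(\tau)$ is positive and not summable, giving long memory: $\sum_{n\geq 1}r(n)=\infty$.

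The main obstacle is the dominated-convergence step: I must exhibit an integrable majorant for $g(\tau x)g(\tau y)|1+x-y|^{2d-1}$ (uniformly in $\tau\geq 1$), and check integrability of the limiting kernel $x^{h-1}y^{h-1}|1+x-y|^{2d-1}$ on $(0,\infty)^2$. Near the origin each factor is integrable because $h>0$, near infinity the bound $|1+x-y|^{2d-1}\leq (|x|+|y|)^{2d-1}$ combined with $2(d+h)<1$ controls the tails, and near the diagonal $x=y$ the singularity $|1+x-y|^{2d-1}$ is integrable since $2d-1>-1$. These three local estimates, together with a uniform bound $g(\tau x)\leq c(\alpha)(\tau x)^{h-1}\wedge 1$, close the argument; everything else is bookkeeping.
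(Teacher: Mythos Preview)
Your proposal is correct and follows essentially the same route as the paper: stationarity via the change of variable $u\mapsto u-b$ and the stationary increments of $L^{d}$, and long memory by rescaling the covariance integral to extract the power law $r(\tau)\sim C\,\tau^{2(d+h)-1}$ (the paper's change of variables $u=ty$, $v=tx$ on $(-\infty,t)\times(-\infty,0)$ is equivalent to your flip-then-rescale on $(0,\infty)^2$). If anything, you are more careful than the paper, which simply writes ``$\sim$'' for the limit passage without supplying the dominated-convergence majorant you outline.
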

\begin{proof}
By (\ref{z}), the stationarity of the increments of $L^{d}$ and making the change of variable $\tilde{u} = u-b$, we can get for every $b>0$

\begin{eqnarray*}
Z^{d}(t+b) &=&  \int_{-\infty}^{t+b}  \left(\dfrac{\alpha}{\alpha + t+b -u} \right)^{1-h}    dL^{d}(u)  \\
&\stackrel{(d)}{=}&  \int_{-\infty}^{t}  \left(\dfrac{\alpha}{\alpha +t -u} \right)^{1-h}    dL^{d}(u) = Z^{d}(t).  
\end{eqnarray*}
With respect to the long memory property we will study the non summability of the covariance. Since $Z^{d}$ is a stionary process, we have 
\begin{eqnarray*}
E_{L}[Z^{d}(a)Z^{d}(t+a)] &=&  E_{L}[Z^{d}(0)Z^{d}(t)].
\end{eqnarray*}
Then, Lemma \ref{lemma2} implies
\begin{eqnarray*}
 E_{L}[Z^{d}(0)Z^{d}(t)] &=& C_{d} \int_{-\infty}^{0}\int_{-\infty}^{t} \left(\dfrac{\alpha}{\alpha + t -u} \right)^{1-h}  \left(\dfrac{\alpha}{\alpha  -v} \right)^{1-h} \vert u-v\vert^{2d-1} dudv\\ 
 &=& C_{d} \alpha^{2-2h} \int_{-\infty}^{0}\int_{-\infty}^{t} \left(\alpha + t -u\right)^{h-1}  \left( \alpha  -v \right)^{h-1} \vert u-v\vert^{2d-1} dudv\\
&=& C_{d} \alpha^{2-2h} t^{2h+2d-1}  \int_{-\infty}^{0}\int_{-\infty}^{1} \left( \dfrac{\alpha}{t} +1 -y\right)^{h-1}  \left(  \dfrac{\alpha}{t} -x \right)^{h-1} \vert x-y\vert^{2d-1} dxdy\\  
& \sim & C_{d} \alpha^{2-2h} t^{2h+2d-1}  \int_{-\infty}^{0}\int_{-\infty}^{1} \left( 1 -y\right)^{h-1}  \left(-x \right)^{h-1} \vert x-y\vert^{2d-1} dxdy\\  
&=& C_{d,\alpha,h}  t^{2h+2d-1}.
\end{eqnarray*}
Then, the long memory property is obtain by just noticing that $h+d>0$.
\qed
\end{proof}
\begin{remark}
Even if $d=0$, the long memory property is satisfied if $h>0$. \end{remark}
\begin{remark}
We can see that the process $Z^{d}$ has the following property 
$$ \dfrac{\rho(t)}{\rho(0)} \sim C_{\alpha, h , d}  t^{2h+2d-1}, \mbox{with} \quad  \rho(t) =E_{L}[Z^{d}(0)Z^{d}(t)],$$
i.e., the process is almost self-similar (see \cite{igloi} page 13 for details).  
\end{remark}

With respect to the behavior of the process $Z^{d}$ with respect to the parameter $\alpha$ we have the following results 
\begin{prop}
Let $t \geq 0$ and $d \in (0,1/2)$, then the random variable 
\begin{equation*}
Z^{d}(t) - Z^{d}(0)
\end{equation*}  
converges en $L^{2}(\Omega_{L})$ as $\alpha \rightarrow \infty$ to the random variable $L^{d}(t)$.
\end{prop}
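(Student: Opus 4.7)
The plan is to express $Z^d(t) - Z^d(0) - L^d(t)$ as a single stochastic integral against $L^d$, and then apply the isometry of Lemma \ref{lemma2} to bound its second moment. The first observation is that $L^d(t) = \int_{\mathbb{R}} 1_{[0,t]}(u)\,dL^d(u)$: by Lemma \ref{lemma1} this integral equals $\int_{\mathbb{R}}(I^d_- 1_{[0,t]})(s)\,dL(s)$, and a short computation gives $(I^d_- 1_{[0,t]})(s) = \frac{1}{\Gamma(d+1)}\bigl[(t-s)_+^d - (-s)_+^d\bigr] = f_t^{(d)}(s)$, which is precisely the kernel defining $L^d(t)$ in (\ref{poisson}). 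Writing $g(t-u)=\bigl(\alpha/(\alpha+t-u)\bigr)^{1-h}$, one therefore obtains
\begin{equation*}
Z^d(t) - Z^d(0) - L^d(t) = \int_{\mathbb{R}} \phi_\alpha(u)\,dL^d(u),
\end{equation*}
with $\phi_\alpha(u) = [g(t-u) - g(-u)]\,1_{\{u<0\}} + [g(t-u)-1]\,1_{\{0\le u\le t\}}$.

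By Lemma \ref{lemma2}, $E_L[(Z^d(t)-Z^d(0)-L^d(t))^2]$ equals $C_d$ times $\iint \phi_\alpha(u)\phi_\alpha(v)|u-v|^{2d-1}\,du\,dv$. Decomposing $\phi_\alpha = \phi_\alpha^- + \phi_\alpha^+$ into its restrictions to $(-\infty,0)$ and $(0,t)$, and using that the quadratic form $f\mapsto \iint f(u)f(v)|u-v|^{2d-1}du\,dv$ is positive semi-definite (again by Lemma \ref{lemma2}), the cross term is controlled via Cauchy--Schwarz by the two diagonal pieces. Hence it suffices to show that $\iint_{(-\infty,0)^2}\phi_\alpha^-(u)\phi_\alpha^-(v)|u-v|^{2d-1}$ and $\iint_{(0,t)^2}\phi_\alpha^+(u)\phi_\alpha^+(v)|u-v|^{2d-1}$ both vanish as $\alpha\to\infty$.

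For the second diagonal, the mean value theorem applied to $x\mapsto(1+x)^{h-1}$ gives $|\phi_\alpha^+(u)| = \bigl|(1+(t-u)/\alpha)^{h-1}-1\bigr|\le(1-h)t/\alpha$ uniformly on $[0,t]$, so this piece is of order $\alpha^{-2}$. For the first diagonal I would rescale $u=\alpha x$, $v=\alpha y$ with $\beta:=t/\alpha$, which turns the integral into $\alpha^{2d+1}$ times a double integral of $\bigl[(1+\beta-x)^{h-1}-(1-x)^{h-1}\bigr]\bigl[(1+\beta-y)^{h-1}-(1-y)^{h-1}\bigr]|x-y|^{2d-1}$ over $(-\infty,0)^2$. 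A second application of the mean value theorem yields, for $x\le 0$, the bound $|(1+\beta-x)^{h-1}-(1-x)^{h-1}|\le(1-h)\beta(1-x)^{h-2}$, so the first diagonal is bounded by $(1-h)^2 t^2\,\alpha^{2d-1} J$ with
\begin{equation*}
J := \int_{-\infty}^{0}\!\!\int_{-\infty}^{0}(1-x)^{h-2}(1-y)^{h-2}|x-y|^{2d-1}\,dx\,dy.
\end{equation*}
The main technical point is verifying $J<\infty$ under the standing hypothesis $0<h<1/2-d$; by symmetry and by splitting the region into parts near and far from the diagonal, the required integrability reduces to the inequalities $h+2d<2$ (handling the weight $|x-y|^{2d-1}$ near the diagonal) and $2h+2d<3$ (handling the decay of $(1-x)^{h-2},(1-y)^{h-2}$ at $-\infty$), both automatic from $h+d<1/2$. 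Since $d\in(0,1/2)$ gives $\alpha^{2d-1}\to 0$, both diagonal contributions vanish, which completes the proof.
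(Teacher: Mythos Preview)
Your argument is correct, and its structure matches the paper's proof: express $Z^d(t)-Z^d(0)-L^d(t)$ as a single integral $\int_{\mathbb{R}}\phi_\alpha(u)\,dL^d(u)$, apply Lemma~\ref{lemma2} to turn the $L^2$-norm into a double integral, and show the latter tends to zero. Where you diverge is in the last step. The paper simply observes that $\phi_\alpha(u)\to 0$ pointwise and invokes the dominated convergence theorem, without exhibiting a dominating function. You instead split $\phi_\alpha=\phi_\alpha^-+\phi_\alpha^+$, control the cross term by Cauchy--Schwarz for the positive form $\iint f(u)f(v)|u-v|^{2d-1}\,du\,dv$, and then use the mean value theorem plus the rescaling $u=\alpha x$ to bound each diagonal piece explicitly. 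This buys you an actual rate: the $(0,t)^2$-piece is $O(\alpha^{-2})$ and the $(-\infty,0)^2$-piece is $O(\alpha^{2d-1})$, so the total error is $O(\alpha^{2d-1})$. Your verification that $J<\infty$ under $0<h<1/2-d$ is also correct (and that hypothesis, while not restated in the proposition, is the standing one under which $Z^d\in L^2(\Omega_L)$). In short, your route is a more quantitative version of the same idea; the paper's appeal to dominated convergence is terser, but finding the dominating function for the $(-\infty,0)^2$ region uniformly in $\alpha$ is not entirely trivial, so your explicit estimate is arguably the cleaner justification.
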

\begin{proof}
By (\ref{z}) we obtain 
\begin{eqnarray*}
Z^{d}(t) - Z^{d}(0) &=& \int_{-\infty}^{t}  \left(\dfrac{\alpha}{\alpha + t -u} \right)^{1-h}    dL^{d}(u) -\int_{-\infty}^{0}  \left(\dfrac{\alpha}{\alpha  -u} \right)^{1-h}    dL^{d}(u)  \nonumber \\
&=&\int_{\mathbb{R}}\left[ \left(\dfrac{\alpha}{\alpha + t -u} \right)^{1-h} 1_{(-\infty,t)}(u)  -  \left(\dfrac{\alpha}{\alpha  -u} \right)^{1-h} 1_{(-\infty,0)}(u)   \right] dL^{d}(u) \\
&=& \int_{\mathbb{R}} f_{t,d}(u) dL^{d}(u). 
\end{eqnarray*}
Clearly, $f_{t,d}$ converges to $1_{(0,t)}$ as $\alpha \rightarrow \infty$. Therefore, our candidate as a limit will  be  
$L^{d}(t)$. In fact, by Lemma \ref{lemma2}
\begin{eqnarray*}
&&E_{L}[(Z^{d}(t) - Z^{d}(0) - L^{d}(t))^{2} ] \\
&=& C_{d} \int_{\mathbb{R}} \int_{\mathbb{R}}   \left[ \left(\dfrac{\alpha}{\alpha + t -u} \right)^{1-h} 1_{(-\infty,t)}(u)  -  \left(\dfrac{\alpha}{\alpha  -u} \right)^{1-h} 1_{(-\infty,0)}(u) - 1_{(0,t)}(u)    \right] \\
& \times & \left[ \left(\dfrac{\alpha}{\alpha + t -v} \right)^{1-h} 1_{(-\infty,t)}(v)  -  \left(\dfrac{\alpha}{\alpha  -v} \right)^{1-h} 1_{(-\infty,0)}(v) - 1_{(0,t)}(v)    \right] \vert u-v\vert^{2d-1} du dv. 
\end{eqnarray*}
Finally, the result is obtained by means of the dominated convergence theorem. \qed
\end{proof}
\vspace{0.5cm}
If  $\alpha \rightarrow 0$, then we get the following result. 
\begin{prop}
Let $t \geq 0$ and let us define $\tilde{Z}^{d}(t)$ by 
\begin{equation*}
\tilde{Z}^{d}(t) = \alpha^{h-1} \int_{0}^{t} Z^{d}(s)ds,
\end{equation*} 
then, as $\alpha \rightarrow 0$ the random variable  $\tilde{Z}^{d}(d)$ converges in $L^{2}(\Omega_{L})$ to the random variable $Y^{d}$ given by 
\begin{equation}\label{y}
Y^{d}(t) := \dfrac{1}{h} \int_{\mathbb{R}} [(t-u)^{h}_{+} - (-u)_{+}^{h}]dL^{d}(u)
\end{equation}
\end{prop}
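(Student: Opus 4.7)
The plan is to rewrite $\tilde Z^d(t)$ as a single stochastic integral against $L^d$ via a stochastic Fubini argument, identify the pointwise limit of the resulting deterministic kernel as $\alpha\to 0$, and then apply Lemma~\ref{lemma2} together with the dominated convergence theorem to pass to the limit in $L^2(\Omega_L)$.

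Concretely, interchanging $ds$ with $dL^d(u)$ in (\ref{z}) and performing the inner $s$-integration explicitly gives
$$\tilde Z^d(t) = \int_{\mathbb{R}} K_\alpha(t,u)\,dL^d(u),$$
with
$$K_\alpha(t,u)=\begin{cases} \tfrac{1}{h}\bigl[(\alpha+t-u)^h-(\alpha-u)^h\bigr], & u\le 0,\\[2pt] \tfrac{1}{h}\bigl[(\alpha+t-u)^h-\alpha^h\bigr], & 0<u\le t,\\[2pt] 0, & u>t. \end{cases}$$
For every fixed $u$ one checks that $K_\alpha(t,u)\to K_0(t,u):=\tfrac{1}{h}[(t-u)_+^h-(-u)_+^h]$ as $\alpha\to 0$, and by definition $Y^d(t)=\int_{\mathbb{R}}K_0(t,u)\,dL^d(u)$. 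Lemma~\ref{lemma2} then expresses the $L^2$-error as
$$E_L\bigl[(\tilde Z^d(t)-Y^d(t))^2\bigr]=C_d\int_{\mathbb{R}}\int_{\mathbb{R}}(K_\alpha-K_0)(t,u)\,(K_\alpha-K_0)(t,v)\,|u-v|^{2d-1}\,du\,dv,$$
so the task reduces to passing to the limit under this double integral.

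The main obstacle is producing a majorant, integrable against $|u-v|^{2d-1}\,du\,dv$, that dominates $|K_\alpha-K_0|(t,\cdot)$ uniformly for small $\alpha$, so that dominated convergence applies. I would split into the regions $\{u\le 0\}$ and $\{0<u<t\}$ (both kernels vanish on $\{u>t\}$). On $\{u\le 0\}$, concavity of $x\mapsto x^h$ yields $|K_\alpha(t,u)|,|K_0(t,u)|\le t(-u)^{h-1}$ for $|u|$ large while both stay uniformly bounded near $0$; on $(0,t]$ the kernels are uniformly bounded by a constant depending only on $t$ and $h$. The resulting weighted double integral is finite exactly because of the standing assumption $0<h<1/2-d$, the same condition used earlier to show $E_L[(Z^d(t))^2]=C_{\alpha,h,d}<\infty$. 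With the majorant in hand, dominated convergence gives $E_L[(\tilde Z^d(t)-Y^d(t))^2]\to 0$, which is the claimed convergence.
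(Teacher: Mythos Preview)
Your argument is correct and follows essentially the same route as the paper: a Fubini step to write $\tilde Z^d(t)$ as $\int_{\mathbb{R}} r_{\alpha,t,h}(v)\,dL^d(v)$ with $r_{\alpha,t,h}(v)=\tfrac{1}{h}\bigl[(\alpha+t-v)^h-(\alpha+(v\vee 0)-v)^h\bigr]$ (your piecewise $K_\alpha$ is exactly this), pointwise convergence of the kernel, and then Lemma~\ref{lemma2} plus dominated convergence. In fact you supply more detail than the paper, which simply appeals to ``similar arguments to the previous proposition'' for the domination step; your majorant based on the mean-value bound $t(-u)^{h-1}$ at infinity and a uniform constant on the compact part is precisely what is needed, and its integrability against $|u-v|^{2d-1}$ is equivalent to the standing condition $h<1/2-d$.
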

\begin{proof}
By (\ref{z}), we get 
\begin{eqnarray*}
\tilde{Z}^{d}(t) &=& \alpha^{h-1} \int_{0}^{t}  \int_{-\infty}^{s}  \left(\dfrac{\alpha}{\alpha + s -u} \right)^{1-h}    dL^{d} (u) ds \\
&=&  \int_{0}^{t}  \int_{-\infty}^{s}  \left( \alpha + s -u \right)^{h-1}    dL^{d} (u) ds \\
&=&   \int_{-\infty}^{t} \int_{v \vee 0}^{t}  \left( \alpha + s -v \right)^{h-1}    ds  dL^{d} (v) \\
&=& \dfrac{1}{h}  \int_{-\infty}^{t}  \left[  \left( \alpha + t -v \right)^{h}  -\left( \alpha + (v \vee 0) -v \right)^{h}\right]dL^{d} (v)\\
&=& \dfrac{1}{h}  \int_{-\infty}^{t}  r_{\alpha, t,h}(v)dL^{d} (v).
\end{eqnarray*}
We can see that  $r_{\alpha, t,h}(v)$ converges to $(t-v)^{h}_{+} - (-v)_{+}^{h}$ for every $v$ as $\alpha \rightarrow 0$. Therefore,  by similar arguments to the previous proposition the result is obtained. \qed
\end{proof}

\begin{prop}\label{sec-y}
Let $Y^{d}=(Y^{d}(t))_{t\geq0}$ with $Y^{d}(t)$ be given by (\ref{y}), then $Y^{d}$  is a stationary process with 
$$E_{L}[(Y^{d}(t))^{2}] = t^{2h+2d+1} E_{L}[(Y^{d}(1))^{2}].$$  
\end{prop}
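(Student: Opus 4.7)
The statement ``$Y^{d}$ is a stationary process'' should be read as \emph{stationary increments} (with $Y^{d}(0)=0$), since the kernel $\tfrac{1}{h}[(t-u)_{+}^{h}-(-u)_{+}^{h}]$ is the moving-average kernel \eqref{Kernel} of a fractional-type process and the claimed variance $t^{2h+2d+1}E_{L}[(Y^{d}(1))^{2}]$ grows in $t$. Under that reading the proof splits into two standard steps, following the template already used in Lemma \ref{stationarity} and in the earlier scaling arguments of Section 4.

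\textbf{Step 1 (stationary increments).} For $b\geq 0$ and $t\geq 0$ I would write
\begin{eqnarray*}
Y^{d}(t+b)-Y^{d}(b) &=& \frac{1}{h}\int_{\mathbb{R}}\left[(t+b-u)_{+}^{h}-(b-u)_{+}^{h}\right]dL^{d}(u)\\
&\stackrel{(d)}{=}& \frac{1}{h}\int_{\mathbb{R}}\left[(t-\tilde{u})_{+}^{h}-(-\tilde{u})_{+}^{h}\right]dL^{d}(\tilde{u})\ =\ Y^{d}(t),
\end{eqnarray*}
where the distributional equality uses the change of variable $\tilde{u}=u-b$ together with the stationarity of the increments of $L^{d}$ (listed in the bullet properties of $L^{d}$ in Section~\ref{Pre}). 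An analogous computation handles finite-dimensional distributions of increments.

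\textbf{Step 2 (second moment and scaling).} Setting $g_{t}(u):=\tfrac{1}{h}[(t-u)_{+}^{h}-(-u)_{+}^{h}]$, I would apply Lemma \ref{lemma2} to obtain
\begin{equation*}
E_{L}[(Y^{d}(t))^{2}]=C_{d}\int_{\mathbb{R}}\int_{\mathbb{R}}g_{t}(u)g_{t}(v)|u-v|^{2d-1}\,du\,dv.
\end{equation*}
Now perform the rescaling $u=tu'$, $v=tv'$. Since
\begin{equation*}
g_{t}(tu')=\frac{t^{h}}{h}\bigl[(1-u')_{+}^{h}-(-u')_{+}^{h}\bigr]=t^{h}g_{1}(u'),\qquad |tu'-tv'|^{2d-1}=t^{2d-1}|u'-v'|^{2d-1},
\end{equation*}
and $du\,dv=t^{2}\,du'\,dv'$, the integrand picks up the factor $t^{h}\cdot t^{h}\cdot t^{2d-1}\cdot t^{2}=t^{2h+2d+1}$, yielding
\begin{equation*}
E_{L}[(Y^{d}(t))^{2}]=t^{2h+2d+1}\,C_{d}\int_{\mathbb{R}}\int_{\mathbb{R}}g_{1}(u')g_{1}(v')|u'-v'|^{2d-1}du'dv'=t^{2h+2d+1}E_{L}[(Y^{d}(1))^{2}].
\end{equation*}

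\textbf{Main obstacle.} The only substantive point is justifying the use of Lemma \ref{lemma2}, i.e.\ checking that $|g_{t}|\in H$. For $|u|\to\infty$ one has $g_{t}(u)=O(|u|^{h-1})$ (by a Taylor expansion of $(t-u)^{h}_{+}$ around $-u$), which together with the local behavior near $0$ and $t$ and the hypothesis $h<1/2-d$ makes $g_{t}\in L^{1}(\mathbb{R})\cap L^{2}(\mathbb{R})$ and $I^{d}_{-}g_{t}\in L^{2}(\mathbb{R})$, in direct analogy with the integrability analysis carried out for the kernel \eqref{Kernel} defining $L^{d}$ itself. Once this integrability is in place, the two displays above finish the proof.
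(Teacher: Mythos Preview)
Your proof is correct and follows essentially the same route as the paper's: the stationary-increments argument via the shift $\tilde u=u-b$ and stationarity of the increments of $L^{d}$ is identical to the paper's, and your second-moment computation via Lemma~\ref{lemma2} followed by the rescaling $u=tu'$, $v=tv'$ is exactly what the paper does (it just suppresses the intermediate scaling details you spell out). Your observation that ``stationary'' should be read as ``stationary increments'' is well taken---the paper in fact proves only $Y^{d}(t+b)-Y^{d}(b)\stackrel{(d)}{=}Y^{d}(t)$---and your added integrability check for $|g_{t}|\in H$ is a point the paper leaves implicit.
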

\begin{proof}
By (\ref{y}) and taking $b >0$
\begin{eqnarray*}
Y^{d}(t + b) - Y^{d}(b)   &=&  \dfrac{1}{h} \int_{\mathbb{R}} [(t + b -u)^{h}_{+} - (b -u)^{h}_{+}]dL^{d}(u) \\
&\stackrel{(d)}{=}&  \dfrac{1}{h} \int_{\mathbb{R}} [(t  -v)^{h}_{+} - ( -v)^{h}_{+}]dL^{d}(v) =  Y^{d}(t),
\end{eqnarray*}
here we have used the change of variable $v = u-b$ and the fact that $L^{d}$ is a stationary increment process. With respect to the second part of the statement, we have that 
\begin{eqnarray*}
E_{L}[(Y^{d}(t))^{2}] &=& \dfrac{1}{h^{2}} C_{d} \int_{\mathbb{R}} \int_{\mathbb{R}} ((t-u)^{h}_{+} - (-u)_{+}^{h})((t-v)^{h}_{+} - (-v)_{+}^{h}) \vert u-v \vert^{2d-1} dvdu\\
&=& t^{2h+2d+1}\dfrac{1}{h^{2}} C_{d}   \int_{\mathbb{R}} \int_{\mathbb{R}} ((1-u)^{h}_{+} - (-u)_{+}^{h})((1-v)^{h}_{+} - (-v)_{+}^{h}) \vert u-v \vert^{2d-1} dvdu\\
&=& t^{2h+2d+1} E_{L}[(Y^{d}(1))^{2}]. 
\end{eqnarray*}
\qed
\end{proof}

\begin{remark}
Let us note that we can write 
$$\tilde{Y}^{d}(t)  = \int_{\mathbb{R}} m_{t}(u) dL^{d}(u),$$
where $m_{t}(u) = \dfrac{1}{h}  [(t-u)^{h}_{+} - (-u)_{+}^{h}]$. Then, Proposition \ref{sec-y} and Theorem 3.1 in \cite{klau} imply
\begin{equation*}
Y^{d}(t) = \int_{\mathbb{R}} m_{t}(u)dL^{d}(u) =  \int_{\mathbb{R}} (I^{g}_{-m})(u) dL(s),
\end{equation*}
where (see Section 3 in the same reference for details) 
$$(I^{g}_{-m})(u) := \int_{u}^{\infty} m_{t}(v)g'(v-u)dv= \int_{\mathbb{R}} m_{t}(v)g'(v-u) dv $$
and $g(u)=(u)^{d}_{+}$. 

This implies that $Y^{d}$ can be seen as a type of Generalized fractional Lévy process (see \cite{klau} for details). 
\end{remark}

\section{Simulations}

In this section we are interested in obtain some simulations related to the process $Z^{d}$.
First,  let us recall how we can simulate a fractional Lévy process. 

In order to simulate sample paths from $L^{d}$ we use a Riemann-Stieltjes approximation, that is, we approximate $L^{d}$ in the following way  (see \cite{mar} for details)  
\begin{eqnarray*}
L^{d}(t) & \approx &  \frac{1}{\Gamma(d+1)} \left( \sum_{k=-n^2}^{0} \left[ \left( t - \frac{k}{n} \right)^{d} -  \left( - \frac{k}{n} \right)^{d} \right] \left( L_{(k+1)/n} - L_{k/n}  \right) \right. \\
&& + \left. \sum_{k=0}^{\lfloor nt \rfloor} \left( t - \frac{k}{n} \right)^{d}  \left( L_{(k+1)/n} - L_{k/n}  \right) \right). 
\end{eqnarray*}

\begin{figure}[H]
    \centering
             \includegraphics[width=7cm]{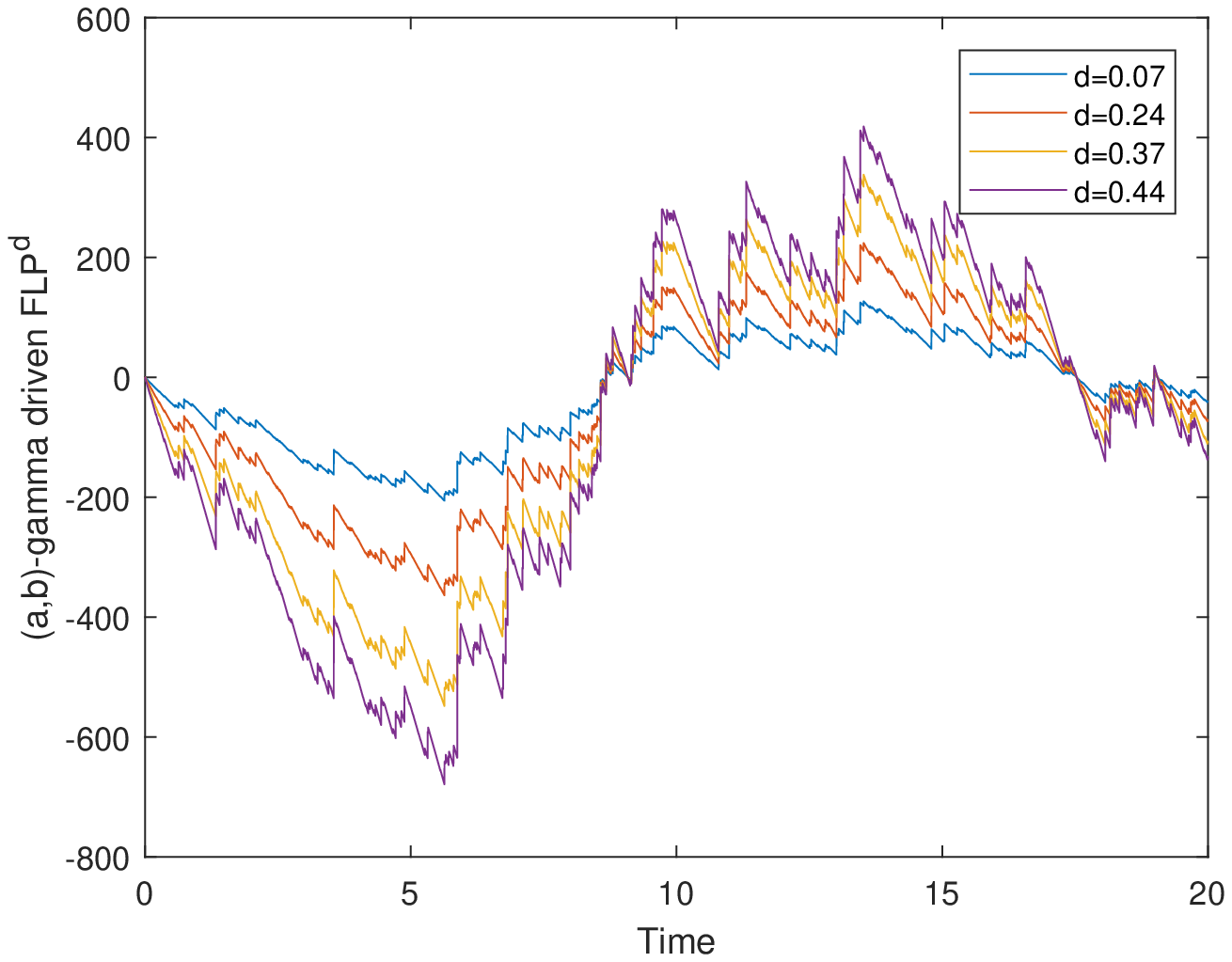}
              \includegraphics[width=7cm]{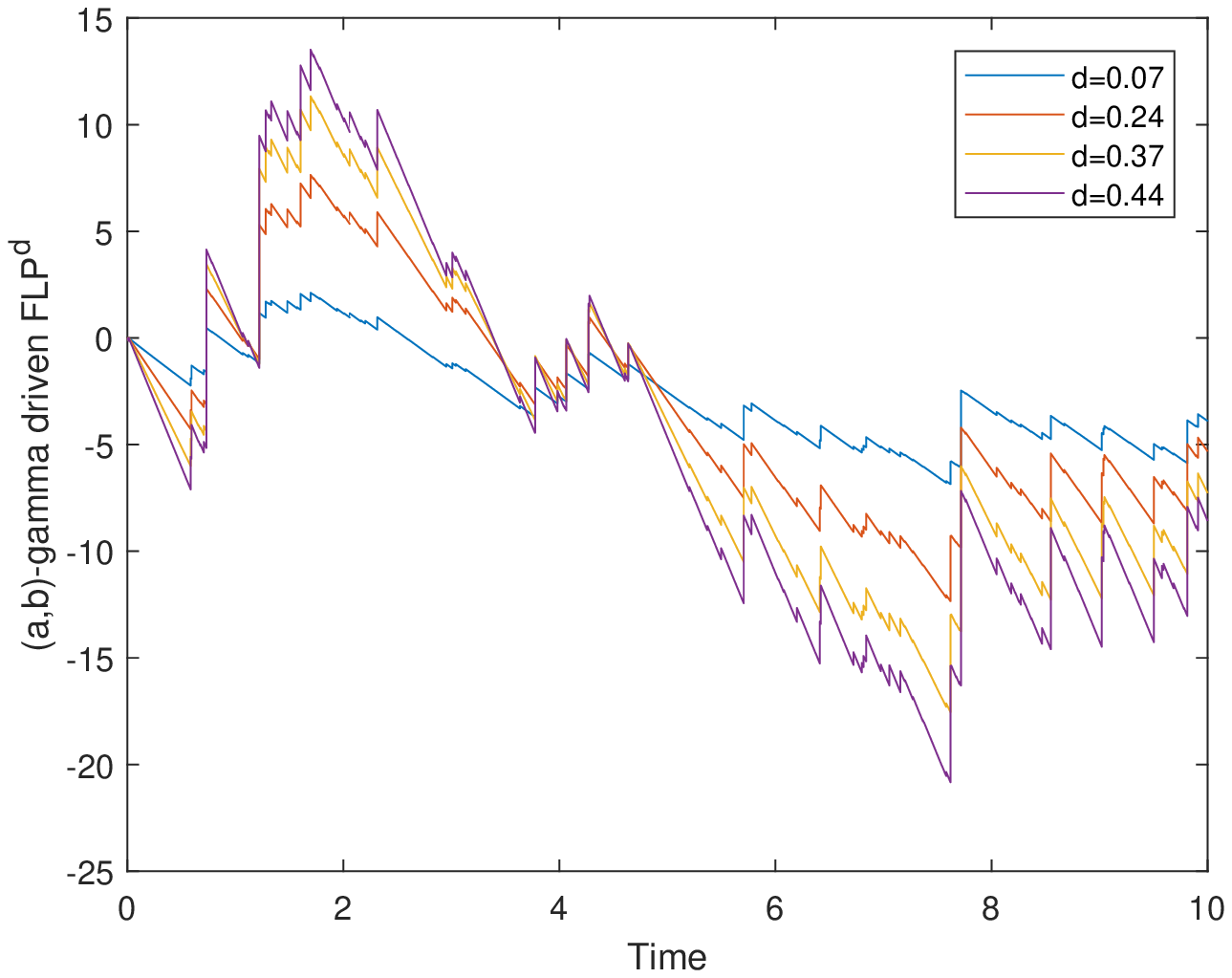}
      \caption{Sample paths of a fractional Lévy process for different values of $d$. The approximation is made using Riemann-Stieltjes approximation with driving Lévy process as a stationary Gamma process with $a=5$ and $b = 15$ ($a=1$ and $b = 2$).}
    \label{paths1}
\end{figure}

\begin{remark}
An optimal form of simulating $L^{d}$ is shown in \cite{taqqu}. Here, we have used  the  Riemann-Stieltjes approximation. This approximation can also be optimal if we take $a_{n} = n^{2-d/1-d}$. An advantage of this procedure is that simulate increments of a Lévy process is relatively easy (see \cite{cont} for details about the simulation of increments of a Lévy process).
\end{remark}

To simulate the process $Z^{d}$ we will use the following Riemann type approximation 
\begin{equation*}
Z^{d,(n)}(t)  = \alpha^{1-h} \sum_{k=-a_{n}}^{\lfloor nt \rfloor } \left(\alpha + t- \dfrac{k}{n} \right)^{h-1} \left(L^{d} \left( \dfrac{k+1}{n} \right) - L^{d} \left(\dfrac{k}{n} \right) \right), \quad t \in \mathbb{R},
\end{equation*}
as in  \cite{mar} we take $a_{n} = n^{2}$. 

\begin{figure}[H]
    \centering
             \includegraphics[width=7cm]{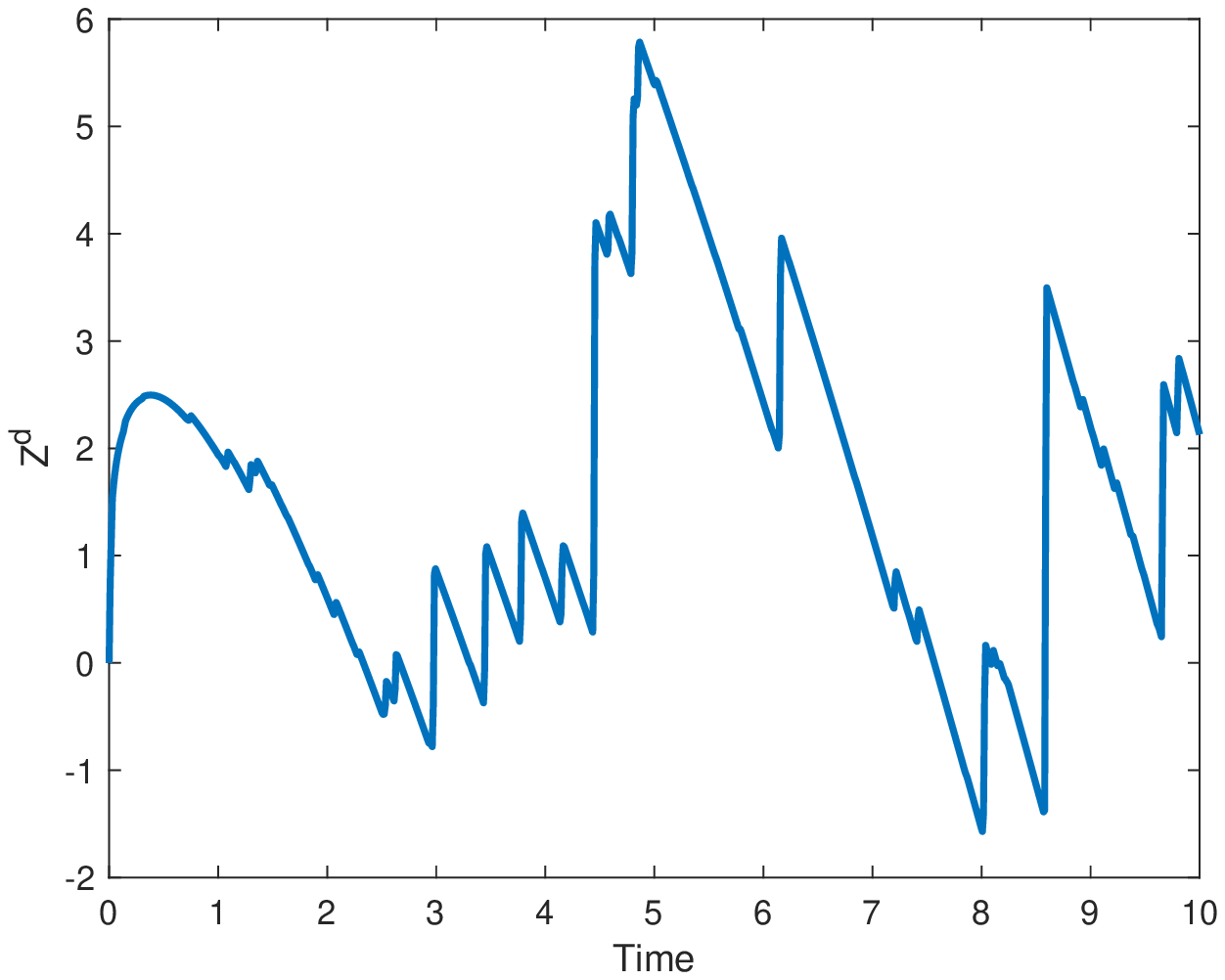}
              \includegraphics[width=7cm]{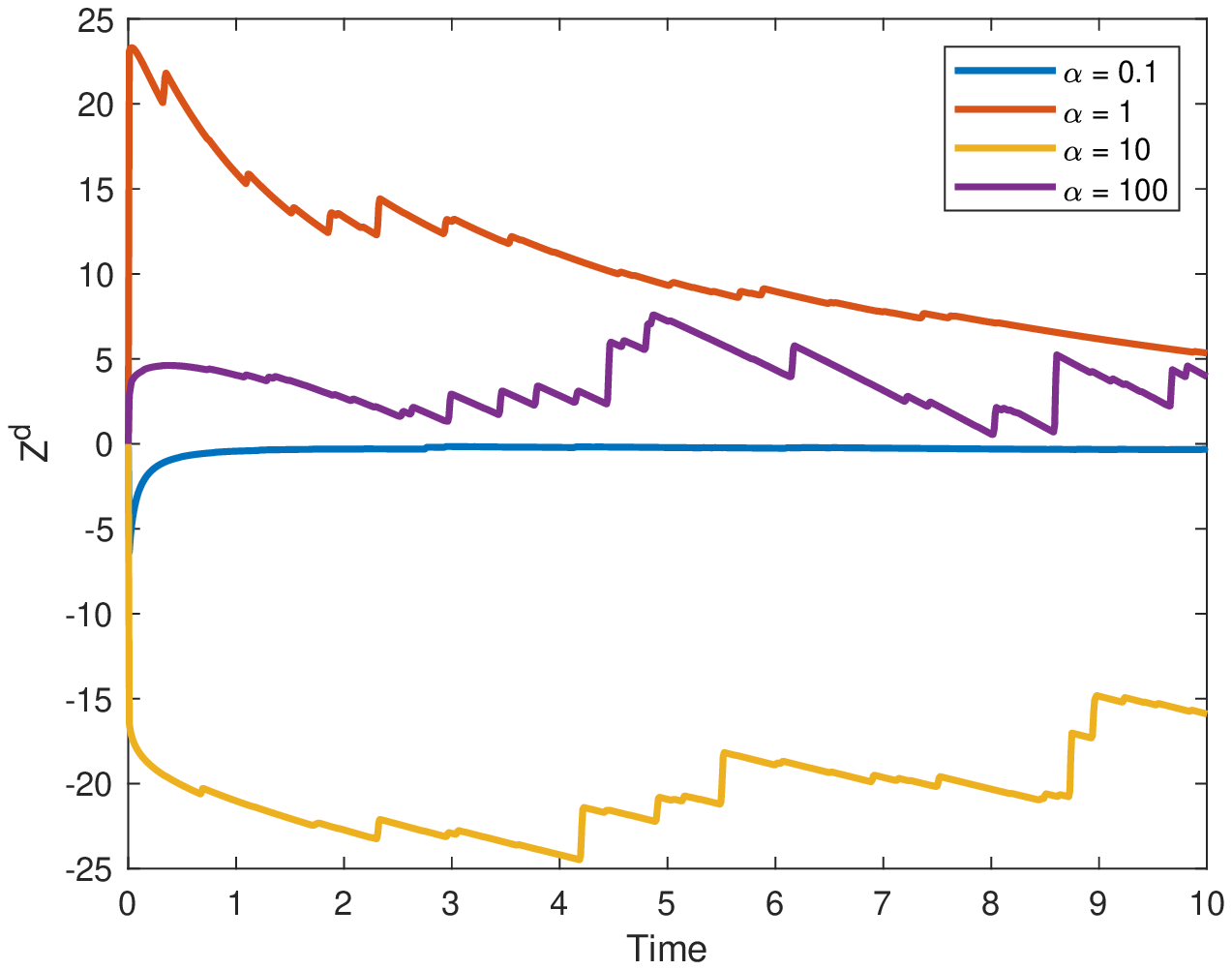}
      \caption{Sample paths $Z^d$ of the limit process for different $\alpha$ and $h= 0.12$ $(a=1, b=2)$.}
    \label{paths2}
\end{figure}

\begin{figure}[H]
    \centering
             \includegraphics[width=7cm]{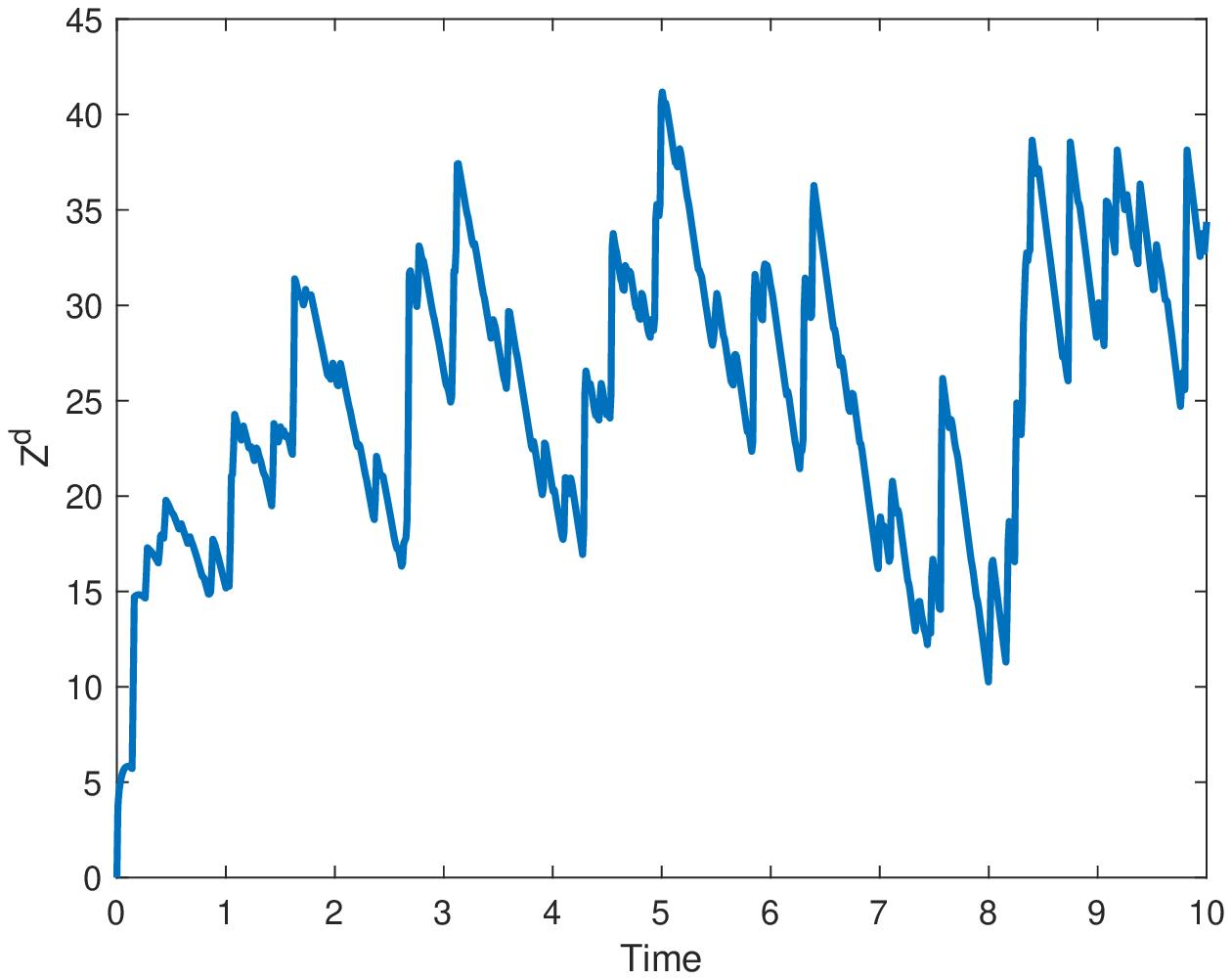}
              \includegraphics[width=7cm]{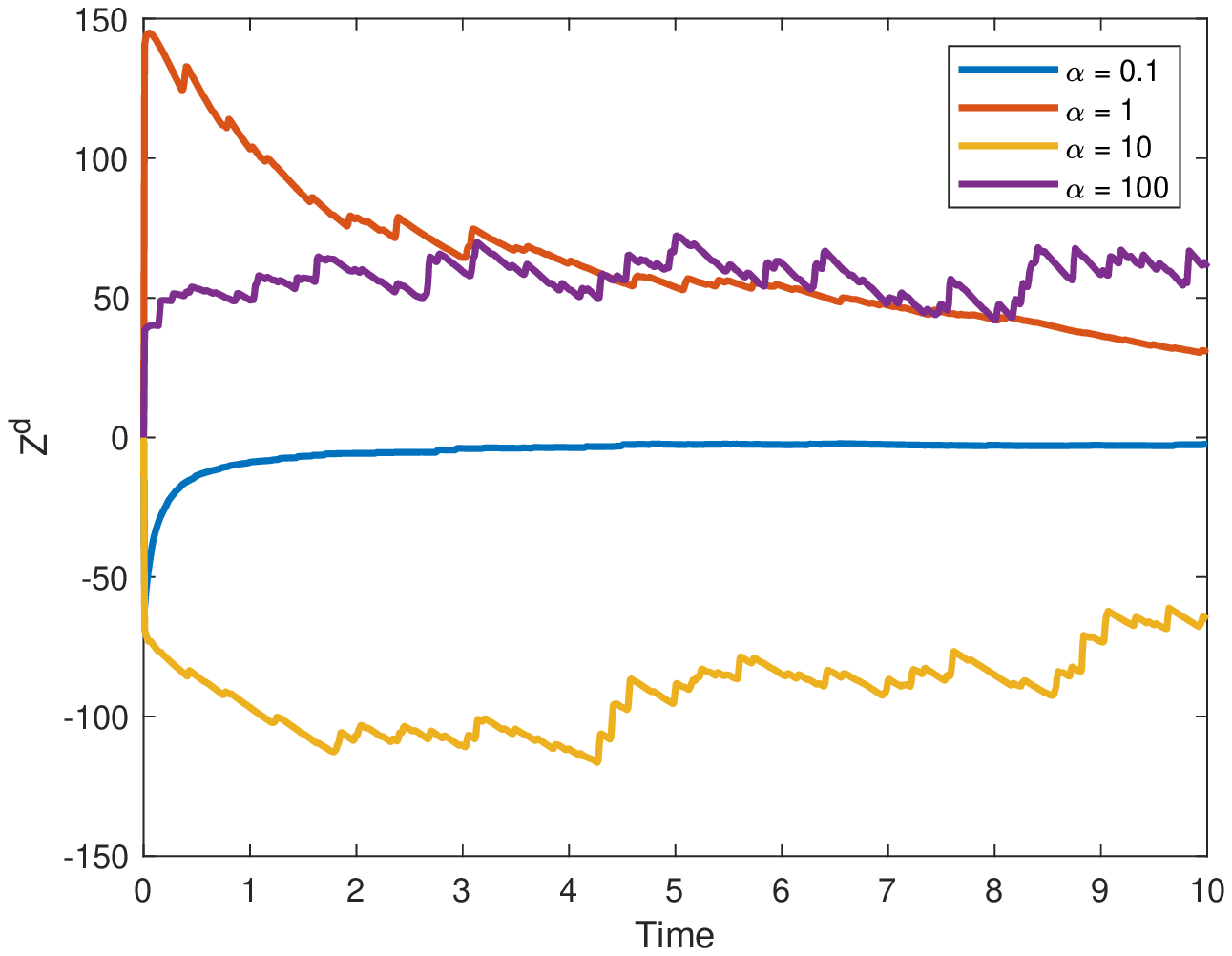}
      \caption{Sample paths $Z^d$ of the limit process for different $\alpha$ and $h= 0.12$ $(a=5, b=5)$.}
    \label{paths3}
\end{figure}

\section*{Acknowledgements}
H\'ector Araya was partially supported by Proyecto Fondecyt PostDoctorado, Chile 3190465, MEC 80190045, Math-Amsud 20-MATH-05 and Redes 190038. Johanna Garz\'on was partially supported by  HERMES project 52433.

\end{document}